\sloppy\pagestyle{plain}
\newtheorem{theorem}[equation]{Theorem}
\newtheorem{proposition}[equation]{Proposition}
\newtheorem{lemma}[equation]{Lemma}
\newtheorem{corollary}[equation]{Corollary}
\theoremstyle{definition}
\newtheorem{example}[equation]{Example}
\newtheorem{definition}[equation]{Definition}
\newtheorem{remark}[equation]{Remark}
\theoremstyle{remark}
\makeatletter\@addtoreset{equation}{section} \makeatother
\newfont{\FieldFont}{msbm10 scaled\magstep1}
\newcommand{\rank}{\operatorname{rank}}
\newcommand{\pfbb}{\noindent\bf Proof~ of ~Theorem~ \ref{T2} }
\begin{document}

\Large
\begin{center}
\textbf{The symmetry of $\text{spin}^{\mathbb{C}}$ Dirac spectrums on Riemannian product manifolds}
\end{center}
\vspace{4mm}

\normalsize
\begin{center}
\textbf{Kyusik Hong and Chanyoung Sung
}\\
\vspace{5mm} \small{\itshape Department of Mathematics, Konkuk
University, Seoul, $143$-$701$, Republic of Korea.}
\end{center}

\hrulefill \vspace{2mm}

 \small {\textbf{Abstract.} It is well-known that the spectrum of a $\text{spin}^{\mathbb{C}}$ Dirac operator on a closed Riemannian $\text{spin}^{\mathbb{C}}$
  manifold $M^{2k}$ of dimension $2k$ for $k \in \mathbb{N}$ is symmetric. In this
  article, we prove that over an odd-dimensional Riemannian product $M_{1}^{2p} \times
  M_{2}^{2q+1}$ with a product $\text{spin}^{\mathbb{C}}$ structure for $p \geq 1, q \geq 0$, the
  spectrum of a $\text{spin}^{\mathbb{C}}$ Dirac operator given by a
  product connection is symmetric if and only if either the $\text{spin}^{\mathbb{C}}$ Dirac spectrum of
  $M_{2}^{2q+1}$ is symmetric or $( e^{ \frac{1}{2}c_{1}(L_{1})} \hat{A}(M_1))[M_{1}]=0$, where $L_1$ is the associated line bundle for the given
$\text{spin}^{\mathbb{C}}$ structure of $M_1$.


 \vspace{2mm}

 \emph{\textbf{Keywords}}: Dirac operator; $\text{spin}^{\mathbb{C}}$ manifold; spectrum; eta invariant\\
 \emph{\textbf{Mathematics Subject Classification 2010}} : 53C27; 58C40; 59J28 }

\hrulefill \normalsize \hrulefill \normalsize

\footnotetext[1]{Date : \today. }

\footnotetext[2]{E-mail addresses : kszoo@postech.ac.kr,
cysung@kias.re.kr}

\thispagestyle{empty}

\section{Introduction}
This article is a generalization of the paper \cite{eck07} to a
$\text{spin}^{\mathbb{C}}$ Dirac operator on a
$\text{spin}^{\mathbb{C}}$ manifold. Let $(M^n,g)$ be an
$n$-dimensional closed Riemannian manifold with a
$\text{spin}^{\mathbb{C}}$ structure given by an associated complex
line bundle $L$ with $c_{1}(L) \equiv w_{2}(M^n) ~~~ \text{mod}~2$.
Here $c_{1}$ is the first Chern class and $w_{2}$ is the $2$-nd
Stiefel-Whitney class. Let $A$ be a $U(1)$-connection on the line
bundle $L$. This combined with the Levi-Civita connection of $g$
induces a covariant derivative
$$\nabla^{A}:\Gamma(\Sigma(M,L)) \rightarrow \Gamma( T^{*}M \otimes
\Sigma(M,L))$$ in the associated spinor bundle $\Sigma(M,L)$. The
associated $\text{spin}^{\mathbb{C}}$ Dirac operator $D^{A}$ is the
composition of the covariant derivative $\nabla^{A}$ and Clifford
multiplication $\gamma$
$$D^{A}= \gamma \circ \nabla^{A}  : \Gamma(\Sigma(M,L)) \rightarrow \Gamma(
T^{*}M \otimes \Sigma(M,L))\rightarrow \Gamma(\Sigma(M,L)).$$ Then
$D^A$ is a self-adjoint elliptic operator of first-order. Therefore
the spectrum $\text{Spec}(D^A)$ of $D^A$ is discrete and real. The
behavior of $\text{Spec}(D^A)$ generally depends on the
$U(1)$-connection $A$, the metric, and the $\text{spin}^{\mathbb{C}}$
structure. For general properties of Dirac operators we refer to
\cite{F00,LM89}.
\begin{definition}\label{D1}
The $\text{Spec}(D^A)$ is called symmetric, if the following
conditions hold:
\begin{enumerate}
\item There exits $-\lambda \in \text{Spec}(D^A)$ whenever $\lambda \in
\text{Spec}(D^A)$.
\item The multiplicity of $-\lambda$ is equal to that of $\lambda$.
\end{enumerate}
\end{definition}
If $n$ is even, then the volume form $\mu$ of $(M^n,g)$
anti-commutes with the $\text{spin}^{\mathbb{C}}$ Dirac operator
$D^A$
$$D^{A} \circ \mu =- \mu \circ D^{A}.$$
Thus, in this case $\text{Spec}(D^A)$ is symmetric.

Since $D^A$ is elliptic, each eigenspace is finite-dimensional. The
asymmetry of $\text{Spec}(D^A)$ on an odd-dimensional manifold was
investigated by Atiyah, Patodi and Singer \cite{Atiyah75} via the
$\mathbf{eta} $ $\mathbf{funtion}$ defined as $$
\eta_{D^A}(s):=\sum_{\lambda \neq 0}
\frac{\textrm{sign}(\lambda)}{|\lambda|^{s}}, ~~~~~~ s \in \mathbb{C},$$ where
$\lambda$ runs through the eigenvalues according to their
multiplicities. The series $\eta_{D^A}(s)$ converges for
sufficiently large $\text{Re}(s)$ and has the meromorphic
continuation to the whole $\mathbb{C}$ with $\eta_{D^A}(0)$ finite.
They showed that the value $\eta_{D^A}(0)$, called the $\mathbf{eta} $ $\mathbf{invariant}$ of $D^A$, appears as a global
correction term for the index theorem for compact manifolds with
boundary. Note that $\eta_{D^A}(s)\equiv 0$ is a necessary condition
for the symmetry of $\text{Spec}(D^A)$.

In this paper, we prove a necessary and sufficient condition for the
symmetry of $\text{Spec}(D^A)$ on odd-dimensional Riemannian
$\text{spin}^{\mathbb{C}}$ product manifolds using the ideas mainly adapted from that of E. C. Kim \cite{eck07}. We will take a convention that a superscript on a manifold denotes its dimension.
\begin{theorem}\label{T2}
Let $( Q^n:=M_{1}^{2p} \times M_{2}^{2q+1}, h:=g_1 +g_2 )$ be a
Riemannian product of two closed Riemannian
$\text{spin}^{\mathbb{C}}$ manifolds $( M_{1}^{2p}, g_1 ), p \geq
1,$ and $( M_{2}^{2q+1}, g_2 ), q \geq 0$. Let $\pi_{1}:Q^n
\rightarrow M_{1}^{2p}$ and $\pi_{2}:Q^n \rightarrow M_{2}^{2q+1} $
be the natural projections.

Suppose that $M_1$ (resp. $M_2$) is equipped with a
$\text{spin}^{\mathbb{C}}$ structure given by a complex line bundle
$L_{1}$ (resp. $L_{2}$) with $c_{1}(L_{1}) \equiv w_{2}(M_{1}^{2p})
~~~ \text{mod}~2$ (resp. $c_{1}(L_{2}) \equiv w_{2}( M_{2}^{2q+1})
~~~ \text{mod}~2)$, and $M_1 \times M_2$ is equipped with the
product $\text{spin}^{\mathbb{C}}$ structure given by the complex
line bundle $L=\pi_{1}^{*}(L_{1})\otimes \pi_{2}^{*}(L_{2})$.

Let $A_1$(resp. $A_2$) be a $U(1)$-connection on $L_1$ (resp. $L_2$), and  $A= \pi_{1}^{*}(A_1)+\pi_{2}^{*}(A_2)$ be a connection of $L$.
Let $D_{M_1}^{A_1}, D_{M_2}^{A_2}$, and $D^A$ be the associated
$\text{spin}^{\mathbb{C}}$ Dirac operators of $( M_{1}^{2p}, g_1 ), (
M_{2}^{2q+1}, g_2 )$, and $(Q^n , h )$, respectively. Then we have
the following :

$\text{Spec}(D^A)$ is symmetric iff either
$\text{Spec}(D_{M_2}^{A_2})$ is symmetric or $( e^{
\frac{1}{2}c_{1}(L_{1})} \hat{A}(M_1))[M_{1}]=0$, where $\hat{A}(M_1)$ denotes the $\hat{A}$-class of $TM_1$.
\end{theorem}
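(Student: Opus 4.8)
The plan is to reduce the whole statement to the standard decomposition of the Dirac operator on a Riemannian product together with the Atiyah--Singer index theorem. Since $\dim M_1 = 2p$ is even, the fibrewise tensor product identifies the spinor bundle of the product $\text{spin}^{\mathbb{C}}$ structure as $\Sigma(Q,L)\cong \pi_1^*\Sigma(M_1,L_1)\otimes \pi_2^*\Sigma(M_2,L_2)$, and under this identification the product metric $h$ and the product connection $A=\pi_1^*(A_1)+\pi_2^*(A_2)$ give
\[
D^A \;=\; D_{M_1}^{A_1}\otimes \mathrm{id} \;+\; \omega_1\otimes D_{M_2}^{A_2},
\]
where $\omega_1$ is the chirality involution of $\Sigma(M_1,L_1)$, equal to $+1$ on $\Sigma^+(M_1,L_1)$ and $-1$ on $\Sigma^-(M_1,L_1)$. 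The first step is to verify this identity carefully, keeping track of the normalizing factors of $i$ in the Clifford action of $TM_2$ so that $D^A$ has exactly this form. This is the routine but slightly delicate bookkeeping, and it is the only real obstacle; everything afterwards is linear algebra plus a citation.

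Next I would diagonalize $D^A$ block by block using the spectral decomposition of $D_1:=D_{M_1}^{A_1}$, writing $L^2(\Sigma(M_1,L_1)) = \ker D_1 \oplus (\ker D_1)^{\perp}$. Because $\omega_1$ anticommutes with $D_1$, it maps the $\mu$-eigenspace $E_\mu$ of $D_1$ isomorphically onto $E_{-\mu}$; hence for each $\mu>0$ and each eigenvalue $\nu$ of $D_2:=D_{M_2}^{A_2}$, choosing orthonormal bases one decomposes $(E_\mu\oplus E_{-\mu})\otimes E_\nu(D_2)$ into $D^A$-invariant planes $\mathrm{span}\{\phi\otimes\psi,\ \omega_1\phi\otimes\psi\}$ on which $D^A$ acts by the matrix $\left(\begin{smallmatrix}\mu & \nu\\ \nu & -\mu\end{smallmatrix}\right)$, with eigenvalues $\pm\sqrt{\mu^2+\nu^2}$. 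Thus the part of $\mathrm{Spec}(D^A)$ coming from $(\ker D_1)^{\perp}\otimes L^2(\Sigma(M_2,L_2))$ is automatically symmetric, with matching multiplicities. On the complementary block $\ker D_1\otimes L^2(\Sigma(M_2,L_2))$, split $\ker D_1 = \ker D_1^+\oplus\ker D_1^-$ by chirality; there $D^A$ acts as $+D_2$ on $\ker D_1^+\otimes L^2(\Sigma(M_2,L_2))$ and as $-D_2$ on $\ker D_1^-\otimes L^2(\Sigma(M_2,L_2))$.

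Write $h^{\pm}=\dim\ker D_1^{\pm}$ and let $m_\lambda$ denote the multiplicity of $\lambda$ in $\mathrm{Spec}(D_2)$; then the multiplicity of $\lambda$ in the second block equals $h^+ m_\lambda + h^- m_{-\lambda}$. Since the first block is already symmetric and the symmetry of a multiset is a linear condition on its multiplicity function, $\mathrm{Spec}(D^A)$ is symmetric if and only if the second block is, i.e.\ if and only if $(h^+-h^-)\,(m_\lambda-m_{-\lambda})=0$ for every $\lambda$; equivalently $h^+=h^-$, or $m_\lambda=m_{-\lambda}$ for all $\lambda$, the latter meaning exactly that $\mathrm{Spec}(D_2)=\mathrm{Spec}(D_{M_2}^{A_2})$ is symmetric. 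Finally $h^+-h^- = \mathrm{ind}(D_1^+)$, and the Atiyah--Singer index theorem for the $\text{spin}^{\mathbb{C}}$ Dirac operator \cite{LM89} gives $\mathrm{ind}(D_1^+) = \big(e^{\frac12 c_1(L_1)}\hat A(M_1)\big)[M_1]$, so $h^+=h^-$ is equivalent to $\big(e^{\frac12 c_1(L_1)}\hat A(M_1)\big)[M_1]=0$. Combining these equivalences yields the statement of the theorem.
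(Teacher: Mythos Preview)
Your argument is correct and follows essentially the same strategy as the paper: split the spinor space according to whether $D_{M_1}^{A_1}$ vanishes, show the non-kernel part contributes a symmetric spectrum, and on the kernel part split by chirality to obtain the factorization $(h^+-h^-)(m_\lambda-m_{-\lambda})$, then invoke Atiyah--Singer. The only presentational difference is that where you diagonalize $D^A$ explicitly on the two-dimensional blocks $\mathrm{span}\{\phi\otimes\psi,\ \omega_1\phi\otimes\psi\}$ via the matrix $\left(\begin{smallmatrix}\mu & \nu\\ \nu & -\mu\end{smallmatrix}\right)$, the paper instead produces the anticommuting operator $\mu_1\cdot D_{+}^{A_1}$ (Proposition~\ref{L1}(1)) and uses it as a bijection $\Gamma_\lambda(D^A)\cap H_\lambda^\perp \to \Gamma_{-\lambda}(D^A)\cap H_{-\lambda}^\perp$; these are the same mechanism viewed from two angles.

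One small point of bookkeeping you already flagged: with the paper's conventions (see \eqref{E10} and the definition of $\Sigma^{\pm}(M_1,L_1)$), the product formula reads $D^A = D_{M_1}^{A_1}\otimes\mathrm{id} + (-1)^p\,\omega_1\otimes D_{M_2}^{A_2}$, so on $\ker D_1^{\pm}$ the operator acts as $\pm(-1)^p D_{M_2}^{A_2}$ rather than $\pm D_{M_2}^{A_2}$. This extra sign is harmless for the conclusion (it permutes $\lambda\leftrightarrow -\lambda$ globally) and matches the $(-1)^p$ appearing in the paper's Lemma~\ref{L4}.
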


As a result auxiliary, we generalize real and quaternionic
structures of spinor bundles on spin manifolds to complex anti-linear mappings between $\Sigma(M,L)$ and $\Sigma(M,-L)$ on $\text{spin}^{\mathbb{C}}$ manifolds, and study several variations on product $\text{spin}^{\mathbb{C}}$ manifolds. These may be used as tools for studying the spectrum of a $\text{spin}^{\mathbb{C}}$  Dirac operator on a product $\text{spin}^{\mathbb{C}}$ manifold.

\section{Preliminaries}
This section is divided into two parts.  In the first part, we
explain that the spinor bundle of a Riemannian
$\text{spin}^{\mathbb{C}}$ product manifold has a natural
tensor-product splitting. In the second part, we prove the
decomposition property of $L^2$-sections of a vector bundle given by
the tensor product of two vector bundles.

Consider the following commutative diagram:
\[
\xymatrix{ \text{Spin}^{\mathbb{C}}(k_1+k_2) \ \
\ar@{<-^{)}}[r]\ar[d]^{\pi}&\ \  \text{Spin}^{\mathbb{C}}(k_1)
\otimes_{S^1} \text{Spin}^{\mathbb{C}}(k_2)\ni ( \pm  a_{1}
\otimes_{\mathbb{Z}_{2}} \pm e^{\frac{i \theta_{1}}{2}})
\otimes_{S^1} ( \pm a_{2} \otimes_{\mathbb{Z}_{2}} \pm e^{\frac{i
\theta_{2}}{2}}) \ar[d]  \\ SO(k_1+k_2) \oplus
S^1 \  \ar@{<-^{)}}[r] & \ \ SO(k_1) \oplus SO(k_2) \oplus S^1 \ni
(a_1, a_2, e^{i(\theta_{1}+\theta_{2})}) ,}
\]
where $\pi$ is a $2$-fold covering map.

Let $P_{M_1}, P_{M_2}$, and
$P_{Q}$  be the $SO(k_1)$-, $SO(k_2)$-, and $SO(k_1 +k_2)$-principal
bundles of positively oriented orthonormal frames of $(M^{k_1}_{1},
g_{1})$, $(M^{k_2}_{2}, g_{2})$, and $(Q^n:=M^{k_{1}}_{1} \times
M^{k_{2}}_{2}, h:=g_1 +g_2)$, respectively. Let $L_{1}$ (resp.
$L_{2}$) be a complex line bundle with $c_{1}(L_{1}) \equiv
w_{2}(M_{1}^{k_1}) ~~~ \text{mod}~2$ (resp. $c_{1}(L_{2}) \equiv
w_{2}( M_{2}^{k_2}) ~~~ \text{mod}~2)$, and let $L=\pi_{1}^{*}L_1
\otimes \pi_{2}^{*}L_2$, where $\pi_{1}: Q \rightarrow M_{1}$ and
$\pi_{2}: Q \rightarrow M_{2}$ are the natural projections. To denote the double cover of a principal bundle, we will put $\widetilde{}$. Then the above pointwise diagram globalizes over
the whole manifold to give the following commutative diagram:

\[
\xymatrix{ \widetilde{P_{Q}\oplus L }\ \
\ar@{<-^{)}}[r]\ar[d]^{\pi}&\ \ \pi_{1}^{*}(\widetilde{P_{M_1}\oplus L_{1}}) \otimes_{S^1}
\pi_{2}^{*}(\widetilde{P_{M_2}\oplus L_{2}}) \ar[d] \\
P_{Q}\oplus (\pi_{1}^{*}L_1 \otimes \pi_{2}^{*}L_2) \
\ar@{<-^{)}}[r] & \ \ (\pi_{1}^{*}P_{M_1} \oplus \pi_{2}^{*}P_{M_2})
\oplus (\pi_{1}^{*}L_1 \otimes \pi_{2}^{*}L_2) .}
\]
Thus, the $\text{Spin}^{\mathbb{C}}(k_1+k_2)$-principle bundle
$\widetilde{P_{Q}\oplus L}$ over $(Q^{n},h)$ reduces to the
$\text{Spin}^{\mathbb{C}}(k_1) \otimes_{S^1}
\text{Spin}^{\mathbb{C}}(k_2)$-principal bundle
$\pi_{1}^{*}(\widetilde{P_{M_1}\oplus L_{1}}) \otimes
\pi_{2}^{*}(\widetilde{P_{M_2}\oplus L_{2}})$ in a $\pi$-equivariant way.

Let $(E_1, \cdots, E_{k_{1}})$ and $(F_1, \cdots, F_{k_{2}})$ be
local orthonormal frames on $(M_{1}^{k_{1}}, g_1)$ and
$(M_{2}^{k_{2}}, g_2)$, respectively. We identify $(E_1, \cdots,
E_{k_1})$ and $(F_1, \cdots, F_{k_2})$ with their lifts to
$(Q^{n},h).$ We may then regard $(E_1, \cdots, E_{k_1},F_1, \cdots,
F_{k_2})$ as a local orthonormal frame on $(Q^{n},h)$. Let
$\mu_1=E^1\wedge \cdots \wedge E^{k_1},~~E^t:=g_1(E_t,\cdot),$ and $
\mu_2=F^1\wedge \cdots \wedge F^{k_2},~~F^l:=g_2(F_l,\cdot),$ be the
volume forms of $(M_{1}^{k_1}, g_1)$ and $(M_{2}^{k_2}, g_2)$,
respectively, as well as their lifts to $(Q^n,h)$.

Now let's  assume
that at least one of $k_1$ and $k_2$ is even. Say,
$k_{1}=2p$ for $p \in \mathbb{N}$.
Using the following Clifford action on the tensor product vector space \cite[section $2$]{eck07}, one can extend the $\text{Spin}^{\mathbb{C}}(k_1) \otimes_{S^1}
\text{Spin}^{\mathbb{C}}(k_2)$-action on $\vartriangle_{k_1} \otimes \vartriangle_{k_2}$ to
the $\text{Spin}^{\mathbb{C}}(k_{1}+k_{2})$-action :
\begin{equation}\label{Dfaction}
\ \ \ \  \ E_t \cdot (\varphi_1 \otimes \varphi_2)=(E_t \cdot \varphi_1)
\otimes \varphi_2, ~t=1, \cdots, 2p,  \ \ \ \  \ \ \ \ \ \ \ \
\ \ \ \ \ \  \ \ \ \ \ \ \ \ \
\end{equation}
\begin{equation}\label{Dfaction1}
F_l \cdot (\varphi_1 \otimes \varphi_2)=(\sqrt{-1})^{p} (\mu_1 \cdot
\varphi_1) \otimes (F_l \cdot \varphi_2), ~l=1, \cdots, k_{2}, \ \ \ \
\end{equation}
where $\varphi_1 \in \vartriangle_{k_1}$ and
$\varphi_2 \in \vartriangle_{k_2}$. If $k_{2}=2p$, the Clifford actions are defined as :
\begin{eqnarray*}
E_t \cdot (\varphi_1 \otimes \varphi_2)&=&(E_t \cdot \varphi_1)
\otimes (\sqrt{-1})^{p} (\mu_2 \cdot
\varphi_2), ~t=1, \cdots, k_{1},\\
F_l \cdot (\varphi_1 \otimes \varphi_2)&=&
\varphi_1 \otimes (F_l \cdot \varphi_2), ~l=1, \cdots, 2p.
\end{eqnarray*}
Hence we can conclude that the associated vector bundle
$\pi_{1}^{*}(\Sigma(M_1,L_1)) \otimes \pi_{2}^{*}(\Sigma(M_2,L_2))$ for the principal bundle
$\pi_{1}^{*}(\widetilde{P_{M_1}\oplus L_{1}}) \otimes_{S^1} \pi_{2}^{*}(\widetilde{P_{M_2}\oplus L_{2}})$ is also
an associated vector bundle for $\widetilde{P_{Q}\oplus L}$.

Recall that there is a unique spinor bundle $\Sigma(M_1 \times M_2,
L)$ on $M_{1}\times M_{2}$, if $k_1 + k_2$ is even, and there exit two of them, if $k_1 + k_2$ is
odd. Suppose
that $\dim M_{1}=2p$ and $\dim M_{2}=2q+1$ for $p \geq 1$ and $q
\geq 0$. Then we have two non-isomorphic spinor bundles $$\Sigma(M_1, L_1) \otimes
\Sigma'(M_2, L_2)\ \ \ \textrm{and}\ \ \ \Sigma(M_1, L_1) \otimes \Sigma''(M_2, L_2)$$ on $M_{1} \times M_{2}$,
where $\Sigma(M_1, L_1)$ is a unique spinor bundle on $M_1$, and
$\Sigma'(M_2, L_2), \Sigma''(M_2, L_2)$ denote two non-isomorphic spinor bundles corresponding to $\Sigma(M_2, L_2)$ on $M_2$
(more precisely, they are the eigenbundles of $+
(\sqrt{-1})^{q+1}$ and $-(\sqrt{-1})^{q+1}$ under the action of
$\mu_2$, respectively). One can easily check that $\Sigma(M_1, L_1)
\otimes \Sigma'(M_2, L_2)$ and $\Sigma(M_1, L_1) \otimes \Sigma''(M_2, L_2)$ are the eigenbundles
of $+(\sqrt{-1})^{p+q+1}$ and $-(\sqrt{-1})^{p+q+1}$ under the action
of the volume form $\mu_1 \wedge \mu_2$ on $M_{1} \times M_{2}$,
respectively.

Since the tensor product bundle $\pi_{1}^{*}(\Sigma(M_1,L_1))
\otimes \pi_{2}^{*}(\Sigma(M_2,L_2))$ and the spinor bundle
$\Sigma(M_1 \times M_2, L=\pi_{1}^{*}L_1 \otimes \pi_{2}^{*}L_2)$
have the same dimension, we have proved the first part of the following :
\begin{lemma}
If at least one of $k_1$ and $k_2$ is even, then
$$\Sigma(M_1 \times M_2, L=\pi_{1}^{*}L_1 \otimes
\pi_{2}^{*}L_2)=\pi_{1}^{*}(\Sigma(M_1,L_1)) \otimes
\pi_{2}^{*}(\Sigma(M_2,L_2)).$$

If both $k_1$ and $k_2$ are odd, then $$\Sigma(M_1 \times M_2, L=\pi_{1}^{*}L_1 \otimes
\pi_{2}^{*}L_2)=(\pi_{1}^{*}(\Sigma'(M_1,L_1)) \oplus \pi_{1}^{*}(\Sigma''(M_1,L_1)))\otimes
\pi_{2}^{*}(\Sigma(M_2,L_2)).$$
\end{lemma}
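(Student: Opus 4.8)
The first assertion was, in effect, already established above: $\pi_{1}^{*}(\Sigma(M_1,L_1))\otimes\pi_{2}^{*}(\Sigma(M_2,L_2))$ is an associated bundle of $\widetilde{P_{Q}\oplus L}$ carrying, via \eqref{Dfaction}--\eqref{Dfaction1}, a fibrewise $\mathrm{Cl}^{\mathbb{C}}(n)$-module structure of rank $2^{[n/2]}$, and this pins it down as $\Sigma(M_1\times M_2,L)$ --- uniquely when $n$ is even, and via the $\pm(\sqrt{-1})^{p+q+1}$-eigenbundle splitting under $\mu_1\wedge\mu_2$ when $n$ is odd. So the plan is to treat only the remaining case $k_1=2p_1+1$, $k_2=2p_2+1$, in which $n=2(p_1+p_2+1)$ is even and $\Sigma(M_1\times M_2,L)$ is the unique spinor bundle on $M_1\times M_2$.

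Here I would mimic the argument of the even case and reduce the bundle identity to a statement about representations. The globalized pointwise diagram above (valid irrespective of the parities of $k_1,k_2$) shows that $\widetilde{P_{Q}\oplus L}$ reduces $\pi$-equivariantly to $\pi_{1}^{*}(\widetilde{P_{M_1}\oplus L_{1}})\otimes_{S^1}\pi_{2}^{*}(\widetilde{P_{M_2}\oplus L_{2}})$, so $\Sigma(M_1\times M_2,L)$ is the bundle associated to this reduced principal bundle through the restriction of the spinor representation $\vartriangle_{n}$ to $\text{Spin}^{\mathbb{C}}(k_1)\otimes_{S^1}\text{Spin}^{\mathbb{C}}(k_2)$. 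Let $\vartriangle'_{k_1},\vartriangle''_{k_1}$ denote the two irreducible $\mathrm{Cl}^{\mathbb{C}}(k_1)$-modules underlying $\Sigma'(M_1,L_1),\Sigma''(M_1,L_1)$ --- the $\pm(\sqrt{-1})^{p_1+1}$-eigenspaces of $\mu_1$; they coincide as $\text{Spin}(k_1)$-modules, the second being the underlying space of the first with the Clifford action of $\mathbb{R}^{k_1}$ negated. It therefore suffices to prove that $\vartriangle_{n}$, restricted to $\text{Spin}^{\mathbb{C}}(k_1)\otimes_{S^1}\text{Spin}^{\mathbb{C}}(k_2)$, is isomorphic to $(\vartriangle'_{k_1}\oplus\vartriangle''_{k_1})\otimes\vartriangle_{k_2}$ (the $S^1$-weights matching by the very definition of $L$ and $A$). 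The ranks agree, since $\dim\vartriangle_{n}=2^{p_1+p_2+1}=2\cdot 2^{p_1}\cdot 2^{p_2}$.

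To produce the isomorphism I would put an explicit $\mathrm{Cl}^{\mathbb{C}}(n)$-module structure on $W:=(\vartriangle'_{k_1}\oplus\vartriangle''_{k_1})\otimes\vartriangle_{k_2}$ extending the pattern of \eqref{Dfaction}--\eqref{Dfaction1}. Identifying $\vartriangle'_{k_1}\oplus\vartriangle''_{k_1}$ with $\vartriangle_{k_1}\otimes\mathbb{C}^{2}$, letting $\epsilon$ act on $\mathbb{C}^{2}=\mathbb{C}\oplus\mathbb{C}$ as $+1$ on the first and $-1$ on the second summand, letting $\tau$ interchange the two summands, and choosing a unit constant $c$ to be fixed by the volume-form convention, set
\begin{align*}
E_t\cdot\big((\varphi_1\otimes v)\otimes\varphi_2\big)&=\big((E_t\cdot\varphi_1)\otimes\epsilon v\big)\otimes\varphi_2,\qquad t=1,\dots,k_1,\\
F_l\cdot\big((\varphi_1\otimes v)\otimes\varphi_2\big)&=c\,\big(\varphi_1\otimes\tau v\big)\otimes(F_l\cdot\varphi_2),\qquad l=1,\dots,k_2,
\end{align*}
for $\varphi_1\in\vartriangle_{k_1}$, $v\in\mathbb{C}^{2}$, $\varphi_2\in\vartriangle_{k_2}$. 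A short computation with $\epsilon^{2}=\tau^{2}=1$ and $\epsilon\tau+\tau\epsilon=0$ shows these operators satisfy the Clifford relations of $\mathrm{Cl}^{\mathbb{C}}(n)$, so by the rank count $W\cong\vartriangle_{n}$ as $\mathrm{Cl}^{\mathbb{C}}(n)$-modules; and since on the quadratic elements $E_sE_t$ and $F_lF_m$ the factor $\mathbb{C}^{2}$ is acted on trivially, the subgroup $\text{Spin}(k_1)$ acts diagonally on $\vartriangle_{k_1}\otimes\mathbb{C}^{2}$ while $\text{Spin}(k_2)$ acts on $\vartriangle_{k_2}$, i.e.\ the restricted $\text{Spin}^{\mathbb{C}}(k_1)\otimes_{S^1}\text{Spin}^{\mathbb{C}}(k_2)$-representation on $\vartriangle_{n}$ is exactly $(\vartriangle'_{k_1}\oplus\vartriangle''_{k_1})\otimes\vartriangle_{k_2}$. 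Transporting this back through the associated-bundle construction for the reduced principal bundle, using compatibility of that construction with pullbacks, direct sums and external tensor products over $S^1$, yields $\Sigma(M_1\times M_2,L)=\pi_{1}^{*}\big(\Sigma'(M_1,L_1)\oplus\Sigma''(M_1,L_1)\big)\otimes\pi_{2}^{*}\Sigma(M_2,L_2)$, as asserted (by the symmetry of the construction it equally equals $\pi_{1}^{*}\Sigma(M_1,L_1)\otimes\pi_{2}^{*}(\Sigma'(M_2,L_2)\oplus\Sigma''(M_2,L_2))$).

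The only genuine obstacle is bookkeeping: choosing the constant $c$ and the precise normalizations so that the new Clifford action on $W$ is consistent with the conventions of \eqref{Dfaction}--\eqref{Dfaction1} and with the declared $\pm(\sqrt{-1})^{\bullet}$-eigenbundle conventions for $\Sigma'$ and $\Sigma''$, and then checking carefully that the restriction to $\text{Spin}^{\mathbb{C}}(k_1)\otimes_{S^1}\text{Spin}^{\mathbb{C}}(k_2)$ is the asserted representation and not a twist of it. Nothing here is deep, but it is exactly where a misplaced sign or power of $\sqrt{-1}$ would break the identification.
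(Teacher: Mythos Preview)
Your proposal is correct and follows the same line as the paper: the first assertion is declared proved by the associated-bundle and dimension-count discussion preceding the lemma (which you accurately summarize), and for the odd--odd case the paper gives no argument at all, simply stating that ``the proof of the second part can be done in the same [way] as the spin case, whose details can be found in \cite{eck07}.'' Your explicit Clifford-module construction on $(\vartriangle'_{k_1}\oplus\vartriangle''_{k_1})\otimes\vartriangle_{k_2}$ via the $\epsilon,\tau$ operators on $\mathbb{C}^{2}$ is exactly the standard representation-theoretic argument that reference carries out in the spin setting, so you have supplied the omitted details rather than taken a different route.
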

The proof of the second part can be done in the same as the spin case, whose details can be found in \cite[section $4$]{eck07}.

Moreover the connections on the LHS of the previous diagram are induced ones from the RHS, which are subbundles.
A $\text{spin}^{\mathbb{C}}$ connection on $\pi_{1}^{*}(\widetilde{P_{M_1}\oplus L_{1}}) \otimes_{S^1}
\pi_{2}^{*}(\widetilde{P_{M_2}\oplus L_{2}})$, which is lifted from downstairs is given by the tensor-product. Therefore, letting $A_i$ for $i=1,2$ and $A=\pi^*_1(A_1)+\pi^*_2(A_2)$ be connections on $L_i$ and $L$ respectively, and $\nabla^{A_1},
\nabla^{A_2}$, and $\nabla^{A}$ be the spinor derivatives of $\Sigma(M_1,L_1),
\Sigma(M_2,L_2)$, and $\Sigma(M_1\times M_2,L)$ respectively, we have
\begin{eqnarray}\label{chanyoung}
\nabla_X^{A}(\varphi_1 \otimes
\varphi_2)=(\nabla_{\pi_{1*}(X)}^{A_1}\varphi_1 )\otimes \varphi_2 +
\varphi_1  \otimes (\nabla_{\pi_{2*}(X)}^{A_2}\varphi_2 )
\end{eqnarray}
 for $\varphi_i \in \Gamma(\pi_{i}^{*}(\Sigma(M_i,L_i)))$ and
$X\in T(M_1\times M_2)$.

Now we prove some analysis lemmas on general vector bundles.
\begin{lemma}\label{L10}
Let $E$ and $F$ be hermitian vector bundles over $M_1,\ M_2$,
respectively, and $\pi^{*}_{1}E \otimes
\pi^{*}_{2}F$ be the induced bundle over $M_1\times M_2$, where each $\pi_{l}:M_1 \times M_2 \rightarrow M_{l}$ for
$l=1, 2$ is the natural projection. Let $L^2(\pi^{*}_{1}E \otimes
\pi^{*}_{2}F ), \ L^2(E),\ L^2(F)$ be the completion, with respect
to the $L^2$-norm, of  $C^0(\pi^{*}_{1}E \otimes \pi^{*}_{2}F ),
\ C^0(E), \ C^0(F)$, respectively. Then we have
$$L^2(\pi^{*}_{1}E \otimes \pi^{*}_{2}F)= \ \overline{\pi^{*}_{1}(L^2(E))
\otimes \pi^{*}_{2} (L^2(F))},$$ where the over-line denotes the $L^2$-completion.
\end{lemma}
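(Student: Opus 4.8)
The plan is to verify the two inclusions "$\supseteq$" and "$\subseteq$" separately; the first is a one-line consequence of Fubini's theorem, while the substantive content lies in "$\subseteq$", which I would reduce via continuity and compactness to the Stone--Weierstrass theorem applied in a local trivialization.

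For "$\supseteq$": since $h=g_1+g_2$ forces $d\mathrm{vol}_h=d\mathrm{vol}_{g_1}\wedge d\mathrm{vol}_{g_2}$, Fubini's theorem gives $\|\pi_1^*s\otimes\pi_2^*t\|_{L^2}=\|s\|_{L^2}\,\|t\|_{L^2}<\infty$ for $s\in L^2(E)$, $t\in L^2(F)$. Thus $\pi_1^*(L^2(E))\otimes\pi_2^*(L^2(F))$ genuinely sits inside $L^2(\pi_1^*E\otimes\pi_2^*F)$, and since the latter space is complete, its $L^2$-closure does too.

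For "$\subseteq$": by the very definition of $L^2(\pi_1^*E\otimes\pi_2^*F)$ as the completion of $C^0(\pi_1^*E\otimes\pi_2^*F)$, and because $M_1\times M_2$ is compact so that uniform convergence implies $L^2$-convergence, it suffices to approximate an arbitrary $\sigma\in C^0(\pi_1^*E\otimes\pi_2^*F)$ in the sup-norm by finite sums $\sum_i\pi_1^*(u_i)\otimes\pi_2^*(v_i)$ with $u_i\in C^0(E)$, $v_i\in C^0(F)$. Fix finite open covers $\{U_\alpha\}$ of $M_1$ and $\{V_\beta\}$ of $M_2$ over which $E$ and $F$ are trivialized by frames $(e^\alpha_k)_k$, $(f^\beta_l)_l$, take subordinate partitions of unity $\{\rho_\alpha\}$, $\{\sigma_\beta\}$, and pick bump functions $\hat\rho_\alpha\in C^0(M_1)$, $\hat\sigma_\beta\in C^0(M_2)$ that equal $1$ on $\mathrm{supp}\,\rho_\alpha$, $\mathrm{supp}\,\sigma_\beta$ and are supported in $U_\alpha$, $V_\beta$. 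Write $\sigma=\sum_{\alpha,\beta}(\rho_\alpha\otimes\sigma_\beta)\sigma$; over $U_\alpha\times V_\beta$ expand $\sigma=\sum_{k,l}c^{kl}\,(e^\alpha_k\otimes f^\beta_l)$ with $c^{kl}$ continuous, so the functions $h^{kl}:=(\rho_\alpha\otimes\sigma_\beta)\,c^{kl}$ extend by zero to compactly supported continuous functions on $M_1\times M_2$ and
\[
(\rho_\alpha\otimes\sigma_\beta)\sigma=\sum_{k,l}h^{kl}\;(\hat\rho_\alpha e^\alpha_k)\otimes(\hat\sigma_\beta f^\beta_l)
\]
holds globally, the sections $\hat\rho_\alpha e^\alpha_k\in C^0(E)$, $\hat\sigma_\beta f^\beta_l\in C^0(F)$ being defined by zero outside the charts. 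Now the finite sums $\sum_i a_i(x)b_i(y)$ with $a_i\in C^0(M_1)$, $b_i\in C^0(M_2)$ form a conjugation-closed subalgebra of $C^0(M_1\times M_2)$ that contains the constants and separates points, so Stone--Weierstrass lets us approximate each $h^{kl}$ uniformly by such a sum; substituting produces a uniform approximation of $(\rho_\alpha\otimes\sigma_\beta)\sigma$ by sums of sections $\pi_1^*(a_i\hat\rho_\alpha e^\alpha_k)\otimes\pi_2^*(b_i\hat\sigma_\beta f^\beta_l)$ of the required form, and summing over the finitely many $\alpha,\beta$ completes the approximation of $\sigma$.

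I expect no deep obstacle: the content is standard functional analysis, and the only point needing care is the local-to-global bookkeeping --- ensuring that after trivializing over a product chart and invoking Stone--Weierstrass, the product-form sections that appear are honest global sections of $E$ and of $F$ --- which is exactly what the bump functions $\hat\rho_\alpha$, $\hat\sigma_\beta$ arrange. An alternative, more abstract route identifies $L^2(\pi_1^*E\otimes\pi_2^*F)$ with the completed (Hilbert-space) tensor product of $L^2(E)$ and $L^2(F)$, but since the lemma is phrased via $L^2$-completions of spaces of continuous sections, the direct density argument above seems the most economical.
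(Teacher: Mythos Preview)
Your argument is correct, but it proceeds along a different line from the paper's. The paper fixes orthonormal bases $\{\varphi_\alpha\}$ of $L^2(E)$ and $\{\psi_\beta\}$ of $L^2(F)$ and shows directly that $\{\varphi_\alpha\otimes\psi_\beta\}$ is a \emph{complete} orthonormal set in $L^2(\pi_1^*E\otimes\pi_2^*F)$: given a continuous section $f$, one first expands $f(x,\cdot)=\sum_\beta a_\beta(x)\psi_\beta$ in $L^2(F)$, checks via the inner-product formula and H\"older that each $a_\beta$ is continuous and lies in $L^2(E)$, and then expands $a_\beta=\sum_\alpha c_{\alpha\beta}\varphi_\alpha$. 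Your route instead reduces, via trivializations and partitions of unity, to approximating scalar continuous functions on $M_1\times M_2$ by finite sums of products, which you dispatch with Stone--Weierstrass. The paper's approach has the advantage of yielding immediately that tensor products of eigenbases form an eigenbasis, which is precisely what the subsequent Corollary on $\Gamma_\gamma(D_{M_1}\otimes\mathrm{Id}+\mathrm{Id}\otimes D_{M_2})$ needs; your approach is arguably more elementary and avoids any discussion of $L^2$-convergence of basis expansions, at the cost of leaning on compactness of the $M_i$ (which is implicit in the paper's setting but not strictly needed for its argument).
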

\begin{proof}
Let $\{\varphi_{\alpha}(x)\}$ and $\{\psi_{\beta}(y)\}$ be orthonormal bases for $\pi^{*}_{1}(L^2(E))$ and $\pi^{*}_{2}
(L^2(F))$ respectively. Then $\{\varphi_{\alpha}(x)\otimes\psi_{\beta}(y)\}$ forms an orthonormal set in $L^2(\pi^{*}_{1}E \otimes \pi^{*}_{2}F)$, and hence
$$L^2(\pi^{*}_{1}E \otimes \pi^{*}_{2}F)\supseteq \overline{\pi^{*}_{1}(L^2(E))
\otimes \pi^{*}_{2} (L^2(F))}.$$

To prove the reverse direction, we will prove that $\{\varphi_{\alpha}(x)\otimes\psi_{\beta}(y)\}$ is actually a maximal orthonormal set, i.e. basis.
Let $\rank (E)=m$ and $f \in
C^{0}(\pi^{*}_{1}E \otimes \pi^{*}_{2}F )$. Take $U \times M_2
\subset M_1 \times M_2$, where $U$ is a small open neighborhood
of a point in $M_1$. Since $f|_{x \times \{M_{2}\}} $ for each $x \in
U$ is continuous and hence in $L^{2}(F)$, the section $f$ on
$U \times M_2$ is expressed as
$$f(x,y)=(f_{1}(x,y), \cdots, f_{m}(x,y)) = \sum_{\beta}a_{\beta}(x)\psi_{\beta}(y) \ \ \ \ \ \ \ \ \ \  $$
 $$  = (\sum_{\beta}a_{\beta,1}(x)\psi_{\beta}(y), \ldots, \sum_{\beta}a_{\beta,m}(x)\psi_{\beta}(y)).$$ Since $f$ is continuous, we have the continuity of
$$a_{\beta,k}(x)=\langle f_{k}(x,y),\psi_{\beta}(y)\rangle_{L^{2}(F)}  =\int_{M_2} \langle f_{k}(x,y) , \psi_{\beta}(y)\rangle_F\ dy,$$
where $\langle\cdot,\cdot \rangle_F$ is the hermitian inner product on $F$.
Applying the H$\ddot{\text{o}}$lder's inequality, we obtain
\begin{eqnarray*}
\int_{U} |a_{\beta,k}(x)|^{2}dx  &\leq&
\int_{U} (\int_{M_2}
|f_{k}(x,y)|_F|\psi_{\beta}(y)|_Fdy)^2  dx\\ &\leq&
 \int _{U}(\int_{M_2}
|f_{k}(x,y)|^{2}_Fdy \int_{M_2}|\psi_{\beta}(y)|^{2}_Fdy )\ dx \\ &=& \int _{U}(\int_{M_2}
|f_{k}(x,y)|^{2}_Fdy)\ dx < \infty ,
\end{eqnarray*}
where the finiteness is due to the fact that $f\in L^2(\pi^{*}_{1}E \otimes
\pi^{*}_{2}F )$ and hence $f_{k} \in L^2(\pi^{*}_{1}E|_{U(p)} \otimes
\pi^{*}_{2}F )$.
This implies that $a_{\beta,k}$  and hence $a_{\beta}$ are locally
in $L^2$. Moreover, since $f \in \Gamma (\pi^{*}_{1}E \otimes
\pi^{*}_{2}F)$ and $\psi_{\beta} \in \Gamma ( \pi^{*}_{2}F)$, we
have $$a_{\beta}(x)= \int_{M_2}\langle f(x,y) ,
\psi_{\beta}(y)\rangle_F\ dy \in \Gamma(\pi^{*}_{1}E).$$ Therefore,
we can write
$$a_{\beta}(x)=\sum_{\alpha} c_{\alpha\beta}\varphi_{\alpha}(x),
$$ for $c_{\alpha\beta} \in \mathbb{C}$, and hence $f$ can be
expressed as
$$f(x,y) = \sum_{\beta}
\sum_{\alpha}c_{\alpha\beta}\varphi_{\alpha}(x)\otimes\psi_{\beta}(y).$$ Because the subset of continuous sections
is dense in the space of $L^2$-sections, the proof is completed.
\end{proof}
\begin{remark}
Lemma \ref{L10} remains valid when $E$ and $F$ are real vector
bundles with Riemannian metrics over $M_1$ and $M_2$, respectively.
\end{remark}
\begin{corollary}\label{coro1}
Let $D_{M_{1}}:E \rightarrow E$ (resp. $D_{M_{2}}:F \rightarrow F$)
denote a linear self-adjoint elliptic differential operator on a
complex vector bundle over a closed manifold $M_{1}$ (resp. $M_{2}$)
and let $\Gamma_{\rho}(D_{M_{j}})$, for $j \in \{1,2\}$, denote the
space of all eigenvectors of $D_{M_{j}}$ with eigenvalue $\rho \in
\mathbb{R}$. If $D_{M_1} \otimes \verb"Id" + \verb"Id" \otimes
D_{M_2}:\pi_{1}^{*}E \otimes \pi_{2}^{*}F \rightarrow \pi_{1}^{*}E
\otimes \pi_{2}^{*}F$ as an operator on $M_1 \times M_2$ is
elliptic, then we have
$$\Gamma_{\gamma}(D_{M_1} \otimes \verb"Id" + \verb"Id" \otimes
D_{M_2}) =\bigoplus_{\gamma=\chi+\nu}(
\pi^{*}_{1}(\Gamma_{\chi}(D_{M_1})) \otimes \pi^{*}_{2}
(\Gamma_{\nu}(D_{M_2}))).$$
\end{corollary}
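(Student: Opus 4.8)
The plan is to reduce \emph{Corollary \ref{coro1}} to the spectral decompositions of the three operators on their $L^2$-spaces and then match eigenspaces. Write $\mathcal{D}:=D_{M_1}\otimes\mathrm{Id}+\mathrm{Id}\otimes D_{M_2}$. First I would dispose of the inclusion $\supseteq$, which is the easy direction: for $\varphi\in\Gamma_\chi(D_{M_1})$ and $\psi\in\Gamma_\nu(D_{M_2})$ the Leibniz-type identity gives $\mathcal{D}(\varphi\otimes\psi)=(D_{M_1}\varphi)\otimes\psi+\varphi\otimes(D_{M_2}\psi)=(\chi+\nu)\,\varphi\otimes\psi$, so each finite-dimensional piece $\pi_1^{*}(\Gamma_\chi(D_{M_1}))\otimes\pi_2^{*}(\Gamma_\nu(D_{M_2}))$ with $\chi+\nu=\gamma$ sits inside $\Gamma_\gamma(\mathcal{D})$; moreover the pieces with different $(\chi,\nu)$ are mutually orthogonal because they lie in the pairwise orthogonal subspaces $\pi_1^{*}(\Gamma_\chi)\otimes\pi_2^{*}(\Gamma_\nu)$ of $L^2(\pi_1^{*}E\otimes\pi_2^{*}F)$ coming from the spectral theorem below, so their sum over $\{\chi+\nu=\gamma\}$ is a genuine internal direct sum.

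For the reverse inclusion I would invoke the spectral theorem for self-adjoint elliptic operators on closed manifolds. Since $D_{M_1}$ and $D_{M_2}$ are such operators, one has the orthogonal Hilbert-space decompositions $L^2(E)=\bigoplus_\chi\Gamma_\chi(D_{M_1})$ and $L^2(F)=\bigoplus_\nu\Gamma_\nu(D_{M_2})$ into finite-dimensional eigenspaces of smooth sections. Choosing orthonormal bases of $L^2(E)$ and $L^2(F)$ consisting of eigenvectors, Lemma \ref{L10} shows their tensor products form an orthonormal basis of $L^2(\pi_1^{*}E\otimes\pi_2^{*}F)$, so that
\[
L^2(\pi_1^{*}E\otimes\pi_2^{*}F)=\overline{\bigoplus_{\chi,\nu}\pi_1^{*}(\Gamma_\chi(D_{M_1}))\otimes\pi_2^{*}(\Gamma_\nu(D_{M_2}))},
\]
the closure of an orthogonal algebraic direct sum over all pairs. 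Regrouping by the value $\gamma=\chi+\nu$ and using the first paragraph, this rewrites as $\overline{\bigoplus_\gamma A_\gamma}$ with $A_\gamma:=\bigoplus_{\chi+\nu=\gamma}\pi_1^{*}(\Gamma_\chi)\otimes\pi_2^{*}(\Gamma_\nu)\subseteq\Gamma_\gamma(\mathcal{D})$, the $A_\gamma$ being pairwise orthogonal; here the ellipticity hypothesis on $\mathcal{D}$ is exactly what forces $\Gamma_\gamma(\mathcal{D})$, and hence each index set $\{(\chi,\nu):\chi+\nu=\gamma\}$, to be finite, so $A_\gamma$ is a finite (thus closed) sum.

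Finally I would upgrade $A_\gamma\subseteq\Gamma_\gamma(\mathcal{D})$ to an equality by a Hilbert-space refinement argument. Because $\mathcal{D}$ is formally self-adjoint and, by assumption, elliptic on the closed manifold $M_1\times M_2$, it also has discrete real spectrum and an orthogonal decomposition $L^2(\pi_1^{*}E\otimes\pi_2^{*}F)=\bigoplus_\gamma\Gamma_\gamma(\mathcal{D})$. Now the two orthogonal decompositions $\bigoplus_\gamma A_\gamma$ and $\bigoplus_\gamma\Gamma_\gamma(\mathcal{D})$ of the same Hilbert space satisfy $A_\gamma\subseteq\Gamma_\gamma(\mathcal{D})$ termwise; if some inclusion were strict there would be a nonzero $v\in\Gamma_\gamma(\mathcal{D})$ with $v\perp A_\gamma$, but then $v\perp A_{\gamma'}$ for every $\gamma'\neq\gamma$ as well (since $\Gamma_\gamma(\mathcal{D})\perp\Gamma_{\gamma'}(\mathcal{D})\supseteq A_{\gamma'}$), whence $v$ is orthogonal to all of $L^2$ and $v=0$, a contradiction. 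Hence $A_\gamma=\Gamma_\gamma(\mathcal{D})$ for all $\gamma$, which is the asserted formula. I expect the main obstacle to be precisely this bookkeeping — keeping the orthogonality structure straight and making legitimate use of the ellipticity of $\mathcal{D}$ to guarantee finiteness — together with the (standard) regularity remark that eigenvectors of $D_{M_1}$, $D_{M_2}$ and $\mathcal{D}$ are smooth sections, so that the pointwise tensor-product computations are valid; the analytic inputs (spectral theorem, elliptic regularity) are classical and need only be cited.
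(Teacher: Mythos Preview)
Your proposal is correct and follows essentially the same route as the paper: the paper also disposes of $\supseteq$ as obvious, then notes that $D_{M_1}\otimes\mathrm{Id}+\mathrm{Id}\otimes D_{M_2}$ is self-adjoint, invokes the spectral theorem so that the unit-norm eigenvectors of $D_{M_1}$, $D_{M_2}$, and $\mathcal{D}$ give orthonormal bases of $L^2(E)$, $L^2(F)$, and $L^2(\pi_1^*E\otimes\pi_2^*F)$, and concludes via Lemma~\ref{L10}. Your write-up simply makes explicit the orthogonality bookkeeping and the ``two decompositions'' refinement that the paper leaves to the reader.
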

\begin{proof}
$``\supseteq"$ part is obvious, and we will show the other direction.
Note that $D_{M_1} \otimes \verb"Id" + \verb"Id" \otimes D_{M_2}$ is
also self-adjoint. Since the unit norm eigenvectors of $D_{M_1}$,
$D_{M_2}$, and $D_{M_1} \otimes \verb"Id" + \verb"Id" \otimes
D_{M_2}$ form orthonormal bases of $L^2(E)$, $L^2(F)$, and
$L^2(\pi^{*}_{1}E \otimes \pi^{*}_{2}F)$, respectively, the proof
 follows from Lemma \ref{L10}.
\end{proof}
\begin{lemma}\label{rema1}
Let $D: E \rightarrow E $ be a linear self-adjoint elliptic
operator, where $E$ is a complex vector bundle over a closed
manifold $M$. Then all the eigenvectors of $D^2$ come from the
eigenvectors of $D$, and the squares of the eigenvalues of $D$ are
exactly the eigenvalues of $D^2$.
\end{lemma}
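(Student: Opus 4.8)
The plan is to reduce the statement to finite-dimensional linear algebra by exploiting the spectral theory of self-adjoint elliptic operators on a closed manifold. First I would recall that, since $D$ is self-adjoint and elliptic on the closed manifold $M$, its spectrum is real and discrete, each eigenspace $\Gamma_{\lambda}(D)$ is finite-dimensional, and the unit-norm eigenvectors of $D$ form a complete orthonormal basis of $L^{2}(E)$. Since $D^{2}$ is again a self-adjoint elliptic operator (its order being twice that of $D$), the same structural facts hold for $D^{2}$; moreover $\langle D^{2}\varphi,\varphi\rangle = \|D\varphi\|^{2}\geq 0$, so the eigenvalues of $D^{2}$ are nonnegative.

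One direction is immediate: if $D\varphi=\lambda\varphi$ with $\varphi\neq 0$, then $D^{2}\varphi=\lambda^{2}\varphi$, so every eigenvector of $D$ is an eigenvector of $D^{2}$, and every square $\lambda^{2}$ of an eigenvalue of $D$ is an eigenvalue of $D^{2}$.

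For the converse I would fix an eigenvalue $\mu$ of $D^{2}$ and consider the eigenspace $V:=\Gamma_{\mu}(D^{2})$, which is finite-dimensional by ellipticity. Because $D$ commutes with $D^{2}$, the subspace $V$ is $D$-invariant, so $D|_{V}$ is a self-adjoint endomorphism of the finite-dimensional Hermitian space $V$. Hence $V$ admits an orthonormal basis consisting of eigenvectors of $D|_{V}$, and for any such eigenvector $\varphi$ with $D\varphi=\lambda\varphi$ we get $\lambda^{2}\varphi=D^{2}\varphi=\mu\varphi$, whence $\lambda=\pm\sqrt{\mu}$. Thus $V$ is spanned by eigenvectors of $D$ (in fact $V=\Gamma_{\sqrt{\mu}}(D)\oplus\Gamma_{-\sqrt{\mu}}(D)$ when $\mu>0$, and $V=\Gamma_{0}(D)$ when $\mu=0$), which shows that every eigenvector of $D^{2}$ is a linear combination of eigenvectors of $D$ and that each eigenvalue of $D^{2}$ is the square of an eigenvalue of $D$. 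Combining the two directions yields the claim.

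There is no serious obstacle here; the only points that deserve care are recording that $D^{2}$ inherits ellipticity and self-adjointness so that it too has finite-dimensional eigenspaces, and observing that the commutation $[D,D^{2}]=0$ is precisely what makes each $\Gamma_{\mu}(D^{2})$ invariant under $D$ --- without this invariance one could not restrict $D$ to $V$ and invoke the finite-dimensional spectral theorem.
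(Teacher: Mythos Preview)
Your argument is correct. The paper's proof is even terser: it simply observes that the unit-norm eigenvectors of $D$ already form an orthonormal basis of $L^{2}(E)$, and since each such eigenvector is automatically an eigenvector of $D^{2}$, the eigenspaces of $D^{2}$ are necessarily spanned by eigenvectors of $D$. Your route---restricting $D$ to each finite-dimensional $\Gamma_{\mu}(D^{2})$ via the commutation $[D,D^{2}]=0$ and then invoking the finite-dimensional spectral theorem---is essentially the local unpacking of that global completeness statement; it has the added virtue of making the decomposition $\Gamma_{\mu}(D^{2})=\Gamma_{\sqrt{\mu}}(D)\oplus\Gamma_{-\sqrt{\mu}}(D)$ (for $\mu>0$) explicit, which is in fact how the lemma gets used later in the paper.
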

\begin{proof}
Obviously,
the eigenvector of $D$ is the eigenvector of $D^2$. Since the unit
norm eigenvectors of $D$ form an orthonormal basis of $L^2(E)$, and
the same is true for the unit norm eigenvectors of $D^2$, the
conclusion follows.
\end{proof}

\section{real and quaternionic structures}
Let's first  review some basic properties of real or quaternionic structures
$j_0$ and $j_1$ of the spinor representation. For more details, the readers are referred to \cite{HB,F00,eck99,eck07,L64}.

The real Clifford algebra $\text{Cl}(\mathbb{R}^{n})$ is
multiplicatively generated by the standard basis $\{e_1, \cdots,
e_n\}$ of the Euclidean space $\mathbb{R}^{n}$ subject to the
relations $e_{i}^{2}=-1$ for all $i \leq n$ and $e_i e_j = - e_j
e_i$ for all $i \neq j$. Note that the dimension of
$\text{Cl}(\mathbb{R}^{n})$ is $2^{n}$. The complexification
$\text{Cl}(\mathbb{R}^n ;\mathbb{C}):=\text{Cl}(\mathbb{R}^n)
\otimes_{\mathbb{R}}\mathbb{C}$ is isomorphic to the matrix algebra
$M(2^{m};\mathbb{C})$ for $n=2m$ and to the matrix algebra
$M(2^{m};\mathbb{C}) \oplus M(2^{m};\mathbb{C})$ for $n=2m+1$. For
an explicit isomorphism map for $n \geq 2$, we refer to
\cite[section $1$]{FK}(or \cite[section $2$]{eck07}). Following
them, let us denote by $u(\epsilon) \in \mathbb{C}^2$ the vector
\begin{equation}\label{Basis1}
u(\epsilon):=\frac{1}{\sqrt{2}} \left( \begin{array}{cc} 1 \\
-\epsilon \sqrt{-1} \end{array} \right),\ \ \ \ \epsilon=\pm 1.
\end{equation}
Then
\begin{equation}\label{Basis2}
u(\epsilon_1,\cdots, \epsilon_m):=u(\epsilon_1)\otimes \cdots
\otimes u(\epsilon_m),\ \ m=[\frac{n}{2}] \geq 1,
\end{equation}
form an orthonormal basis for the spinor space
$\vartriangle_{n}:=\mathbb{C}^{2^{m}}, m=[\frac{n}{2}] \geq 1$, with
respect to the standard hermitian inner product.
\begin{definition}\label{D2}
The complex-antilinear mappings $j_0, j_1 :\vartriangle_{n}
\rightarrow \vartriangle_{n}$ defined, in the notations of
\eqref{Basis1} and \eqref{Basis2}, by
\begin{eqnarray*}
 j_0u(\epsilon_1,\cdots,
\epsilon_m)&=&(\sqrt{-1})^{\sum_{\alpha=1}^{m}\alpha
\epsilon_{\alpha}}u(-\epsilon_1,\cdots,
-\epsilon_m),\\
 j_1u(\epsilon_1,\cdots,
\epsilon_m)&=&(\sqrt{-1})^{\sum_{\alpha=1}^{m}( m-\alpha+1)
\epsilon_{\alpha}}u(-\epsilon_1,\cdots, -\epsilon_m),
~m=[\frac{n}{2}],
\end{eqnarray*}
are called the $j_0$-structure and $j_1$-structure, respectively.
\end{definition}
The following facts are well-known \cite{HB, eck07}. Fix
$m=[\frac{n}{2}]$.
\begin{description}
\item [(A)] $j_{0} \circ e_{k}=e_{k} \circ j_{0}$ for all $k=1, \cdots, 2m$
and $j_{0} \circ e_{2m+1}=(-1)^{m+1}e_{2m+1} \circ j_{0}.$ Thus, the
mapping $j_{0}:\vartriangle_{n} \rightarrow \vartriangle_{n}$ is
$\text{Spin}(n)$-equivariant for $n, n \not\equiv 1 ~~\text{mod}
~4$.
\item [(B)] $j_{1} \circ e_{l}=(-1)^{m+1} e_{l} \circ j_{1}$ for all $l=1, \cdots, n.$ Thus, the mapping $j_{1}:\vartriangle_{n} \rightarrow
\vartriangle_{n}$ is $\text{Spin}(n)$-equivariant for all $n \geq
2$.
\item [(C)] $j_0 \circ j_0 = j_1 \circ j_1 = (-1)^{m(m+1)/2}$ and $j_0
\circ j_1 = j_1 \circ j_0$.
\item [(D)] $\langle j_0 (\psi), j_0 (\varphi) \rangle = \langle j_1 (\psi), j_1 (\varphi) \rangle = \langle \varphi, \psi \rangle, \varphi, \psi \in \vartriangle_{n},
$ where $\langle \cdot , \cdot \rangle $ is the standard hermitian
inner product on $\vartriangle_{n}$.
\end{description}
Thus, $j_0$ (for $n \not\equiv 1 ~~\text{mod} ~4$) and $j_1$ give
real (resp. quaternionic) structures on $\vartriangle_{n}$ as
$\text{Spin}(n)$-representations, if $m \equiv 0,3 ~~\text{mod} ~4$
(resp. $m \equiv 1,2 ~~\text{mod} ~4$).

Let us now fix a local trivialization of $\Sigma(M,L)$ on a spin$^{\Bbb C}$ manifold $M^n$. Namely, let
$\cup_{\alpha} U_{\alpha}$ be an open covering of $M$ for which there exits a system
of transition functions $\{ g_{\alpha_{1}\alpha_{2}}: U_{\alpha_{1}
} \cap U_{\alpha_{2}} \rightarrow \text{Spin}^{\mathbb{C}}(n)=
\text{Spin}(n) \otimes_{\mathbb{Z}_{2}} S^{1} \}$. Define
$\overline{g_{\alpha_{1}\alpha_{2}}}: U_{\alpha_{1} } \cap
U_{\alpha_{2}} \rightarrow \text{Spin}^{\mathbb{C}}(n)$ by $$x
\mapsto f(x) \otimes_{\mathbb{Z}_{2}} \overline{h(x)},$$ where $f(x)
\otimes_{\mathbb{Z}_{2}} h(x)=g_{\alpha_{1}\alpha_{2}}(x)$ for $x
\in U_{\alpha_{1} } \cap U_{\alpha_{2}}, f(x) \in \text{Spin}(n)$
and $h(x) \in S^{1}$. Then $\overline{g_{\alpha_{1}\alpha_{2}}}$ is
the transition function of a local trivialization for
$\Sigma(M,-L)$. By the property $\mathbf{(A)}$, $j_{0} \circ (f(x)
\otimes_{\mathbb{Z}_{2}} h(x)) =( f(x) \otimes_{\mathbb{Z}_{2}}
\overline{h(x)}) \circ j_{0}$ for $n \not\equiv 1 ~~\text{mod} ~4$.
Also, by the property $\mathbf{(B)}$, $j_{1} \circ (f(x)
\otimes_{\mathbb{Z}_{2}} h(x)) =( f(x) \otimes_{\mathbb{Z}_{2}}
\overline{h(x)}) \circ j_{1}$ for $n \geq 2$. We then have the
following commutative diagram:
\[
\xymatrix{ U_{\alpha_{1}} \times \vartriangle_{n} \ \
\ar@{->}[r]^{g_{\alpha_{1}\alpha_{2}}}\ar[d]^{j_{r}}&\ \
U_{\alpha_{2}} \times \vartriangle_{n} \ar[d] \ar[d]^{j_{r}} \\
U_{\alpha_{1}} \times \vartriangle_{n} \
\ar@{->}[r]^{\overline{g_{\alpha_{1}\alpha_{2}}}} & \ \
U_{\alpha_{2}} \times \vartriangle_{n} ,} \] where $r=0$ with $n
\not\equiv 1 ~~\text{mod} ~4$, or $r=1$ with $n \geq 2$. Thus, the
mapping $j_r$ is compatible with the transition functions of
$\Sigma(M,L)$ and $\Sigma(M,-L)$ so that the $j_0$- and
$j_1$-structure can be globalized to mappings $j_0,j_1: \Sigma(M,L)
\rightarrow \Sigma(M,-L)$, and we can carry all the properties
$\mathbf{(A)}- \mathbf{(D)}$ over to $\Sigma(M,L)$. Be aware that the
mapping $j_0$ is well-defined for $n \not\equiv 1$
$\text{mod}~ 4$, and $j_1$ is well-defined for all $n
\geq 2$.

\begin{lemma}
$$ D^{-A} \circ j_0=j_0 \circ D^{A} \ \
\text{for} \ \ n \not\equiv 1 ~~\text{mod} ~4,$$
and
$$D^{-A} \circ j_1=(-1)^{m+1} j_1 \circ D^{A}.$$
\end{lemma}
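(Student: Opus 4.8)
The plan is to work in a fixed local $\mathrm{spin}^{\mathbb{C}}$ trivialization over a small open set, with a local orthonormal frame $(e_1,\dots,e_n)$, and to write the Dirac operators in the usual way as $D^{A}\varphi=\sum_{k=1}^{n}e_k\cdot\nabla^{A}_{e_k}\varphi$ and $D^{-A}\psi=\sum_{k=1}^{n}e_k\cdot\nabla^{-A}_{e_k}\psi$ (tangent and cotangent vectors identified via the metric). The statement then reduces to two intertwining facts: (i) $j_r$ carries $\nabla^{A}$ to $\nabla^{-A}$, i.e. $\nabla^{-A}_{X}\circ j_r=j_r\circ\nabla^{A}_{X}$ for every vector field $X$; and (ii) $j_r$ intertwines Clifford multiplication by tangent vectors with the correct sign, namely $e_k\cdot j_0(\psi)=j_0(e_k\cdot\psi)$ when $n\not\equiv 1\bmod 4$, and $e_k\cdot j_1(\psi)=(-1)^{m+1}j_1(e_k\cdot\psi)$ for all $n\geq 2$ and all $k$. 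Granting (i) and (ii), one computes
\[
D^{-A}(j_r\varphi)=\sum_{k}e_k\cdot\nabla^{-A}_{e_k}(j_r\varphi)=\sum_{k}e_k\cdot j_r(\nabla^{A}_{e_k}\varphi)=\varepsilon_r\sum_{k}j_r(e_k\cdot\nabla^{A}_{e_k}\varphi)=\varepsilon_r\,j_r(D^{A}\varphi),
\]
where $\varepsilon_0=1$ and $\varepsilon_1=(-1)^{m+1}$, which is exactly the assertion; since both sides are globally defined and the identity holds on each trivializing chart, it holds on all of $M$.

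For (i) I would use the local description of the $\mathrm{spin}^{\mathbb{C}}$ connection: in the chosen trivialization $\nabla^{A}=d+\omega+\tfrac12 A$, where $\omega$ is the Levi-Civita connection form written as a real-coefficient combination of the $e_ie_j$, and $A$ is the connection $1$-form of $L$, which takes values in $\mathfrak{u}(1)=i\mathbb{R}$; since $\Sigma(M,-L)$ carries the same $\mathrm{Spin}(n)$ part and the connection $-A$, its spinor derivative is $\nabla^{-A}=d+\omega-\tfrac12 A$. Now $j_r$, being a fixed antilinear matrix in the trivialization, commutes with $d$; it commutes with each $e_ie_j$, because property $\mathbf{(A)}$ (for $r=0$) or $\mathbf{(B)}$ (for $r=1$) lets one move each Clifford generator past $j_r$ either with no sign or with two compensating factors $(-1)^{m+1}$, while the real coefficients of $\omega$ are fixed by complex conjugation; and, being antilinear, $j_r$ conjugates the scalar $\tfrac12 A(X)\in i\mathbb{R}$, turning $+\tfrac12 A$ into $-\tfrac12 A$. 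Summing the three contributions gives $\nabla^{-A}_{X}\circ j_r=j_r\circ\nabla^{A}_{X}$.

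For (ii), property $\mathbf{(A)}$ immediately gives $e_k\circ j_0=j_0\circ e_k$ for $k=1,\dots,2m$. If $n=2m+1$ is odd one also needs the top generator $e_{2m+1}$, for which $\mathbf{(A)}$ only yields $j_0\circ e_{2m+1}=(-1)^{m+1}e_{2m+1}\circ j_0$; but the hypothesis $n\not\equiv 1\bmod 4$ forces $m$ to be odd, so $(-1)^{m+1}=1$ and the relation extends to every $k$ (and if $n=2m$ there is no such generator). For $j_1$, property $\mathbf{(B)}$ gives $e_l\circ j_1=(-1)^{m+1}j_1\circ e_l$ for all $l$ with no restriction on $n$. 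Feeding these into the displayed chain above completes the proof.

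The computations themselves are all short, so the main obstacle is really bookkeeping: in step (i), checking that the products $e_ie_j$ genuinely pass through $j_1$ with the two signs $(-1)^{m+1}$ cancelling and that antilinearity flips only the $\mathfrak{u}(1)$-term, not the Levi-Civita term; and in step (ii), the observation that the hypothesis $n\not\equiv 1\bmod 4$ is exactly what makes $j_0$ commute with the top Clifford generator in odd dimensions. One should also recall that properties $\mathbf{(A)}$--$\mathbf{(D)}$, originally for the flat spinor representation, have already been transported to the global maps $j_r\colon\Sigma(M,L)\to\Sigma(M,-L)$, and that the spinor connection on $\Sigma(M,-L)$ is the Levi-Civita-induced one twisted by $-A$; both points were set up in the discussion preceding the lemma.
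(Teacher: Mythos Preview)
Your proof is correct and follows essentially the same approach as the paper's. The paper writes out the local expression $\nabla^{A}_{X}s = X(s) + \tfrac{1}{2}\bigl(\sqrt{-1}\,A(X) + \sum_{i<j}\omega_{j,i}(X)\,e_i\cdot e_j\bigr)\cdot s$, applies $j_k$ to obtain $\nabla^{-A}_{X}\circ j_k = j_k\circ\nabla^{A}_{X}$, and then invokes properties $\mathbf{(A)}$ and $\mathbf{(B)}$ to conclude---exactly your steps (i) and (ii), though the paper states the final passage more tersely.
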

\begin{proof}
Let $(\omega_{i,j})$ be the $\mathfrak{so}(n)$-valued $1$-form
on $U_{\alpha}$ coming from the Levi-Civita connection of $(M^n,
g)$. The spinor derivative $\nabla^A$ with respect to a
$U(1)$-connection $A$ in the bundle $L$ is locally expressed as
$$j_{k} (\nabla^{A}_{X}s) = j_{k} ( X(s)+ \frac{1}{2}
(\sqrt{-1}A(X)+\sum_{i<j}\omega_{j,i}(X)e_{i} \cdot e_{j})\cdot s
)$$
$$ \ \ \ \ \ \ \ \ \ \ \ \ \ \ \ \  \ \ \  =X(j_{k}(s))+ \frac{1}{2}
(-\sqrt{-1}A(X)+\sum_{i<j}\omega_{j,i}(X)e_{i} \cdot e_{j})\cdot
j_{k}(s)
$$
$$ = \nabla^{-A}_{X} j_{k}(s),  \ \ \ \ \ \ \ \ \ \ \ \ \ \ \ \ \ \ \ \ \ \ \ \ \ \ \ \ \ \ \ \ \ \ \ \ $$
where $s \in \Gamma (\Sigma(M,L))$, $X \in \Gamma(TM)$, and $k=1,2$. Thus, we have
\begin{equation}\label{E1204}
\nabla^{-A}  \circ j_0 = j_0 \circ \nabla^{A} \ \
~\text{and}~ \ \ \ \ \nabla^{-A} \circ j_1 = j_1 \circ \nabla^{A}. \
\
\end{equation}
Now the conclusion immediately follows from  the properties $\mathbf{(A)}$ and $\mathbf{(B)}$.
\end{proof}
As a corollary, we have proved that if $m$ is odd (resp. even), then
$\text{Spec}(D^A)=\text{Spec}(D^{-A})$ (resp.
$-\text{Spec}(D^{-A})$) with the same multiplicities.

Now we take up the case of Theorem \ref{T2}.  Using \eqref{Dfaction}, \eqref{Dfaction1}, and \eqref{chanyoung}, one can
easily verify the following formulas:
\begin{equation}\label{E10}
\ \ \ \ \ \ \ \ \ \ \ \ \ D^{A}(\varphi_1 \otimes
\varphi_2)=(D_{M_1}^{A_1}\varphi_1 )\otimes \varphi_2 +
(\sqrt{-1})^{p}(\mu_1 \cdot \varphi_1) \otimes
(D_{M_2}^{A_2}\varphi_2 ),
\end{equation}
\begin{equation}\label{E170}
(D^{A})^{2}(\varphi_1 \otimes
\varphi_2)=((D_{M_1}^{A_1})^{2}\varphi_1 )\otimes \varphi_2 +
\varphi_1 \otimes ((D_{M_2}^{A_2})^{2}\varphi_2)
\end{equation}
for $\varphi_i \in \Gamma(\pi_{i}^{*}(\Sigma(M_i,L_i)))$.

Consider the $partial$ $\text{spin}^{\mathbb{C}}$ Dirac operators
$D_{+}^{A_1}, D_{-}^{A_2}$ acting on sections $\psi \in \Gamma
(\Sigma(Q,L))$ of the spinor bundle over $(Q^n, h)$:
$$D_{+}^{A_1}\psi= \sum^{2p}_{k=1}E_{k}\cdot \nabla^{A_1}_{E_{k}}
\psi, \ \ \ D_{-}^{A_2}\psi= \sum^{2q+1}_{l=1}F_{l}\cdot
\nabla^{A_2}_{F_{l}}\psi.$$
Define the $twist$ $\widetilde{D}^{A}$ of the
$\text{spin}^{\mathbb{C}}$ Dirac operator
$D^{A}=D_{+}^{A_1}+D_{-}^{A_2}$ by
$$\widetilde{D}^{A}=D_{+}^{A_1}-D_{-}^{A_2}.$$ By \eqref{Dfaction} and \eqref{Dfaction1}, for $i=1,2$
$$D_{+}^{A_1}\circ \mu_i=-\mu_i \circ D_{+}^{A_1}, \ \ \ \
D_{-}^{A_2}\circ \mu_i=\mu_i \circ D_{-}^{A_2},$$
and
\begin{equation}\label{E6}
D^{A}\circ \mu_i=-\mu_i \circ \widetilde{D}^{A}, \ \ \ \
\widetilde{D}^{A} \circ \mu_i=-\mu_i \circ D^{A}.
\end{equation}
Since the $\text{Spin}^{\mathbb{C}}(2p+2q+1)$-principle bundle
$\widetilde{P_{Q}\oplus L}$ over $(Q^n:=M^{2p}_{1} \times
M^{2q+1}_{2},h)$ reduces to the $\text{Spin}^{\mathbb{C}}(2p)
\otimes_{S^1} \text{Spin}^{\mathbb{C}}(2q+1)$-principal bundle
$\pi_{1}^{*}(\widetilde{P_{M_1}\oplus L_{1}}) \otimes
\pi_{2}^{*}(\widetilde{P_{M_2}\oplus L_{2}})$, the
complex-antilinear mapping $j^*:\vartriangle_{2p+2q+1} \rightarrow
\vartriangle_{2p+2q+1}$ defined by $$j^*u(\epsilon_1, \cdots,
\epsilon_p, \epsilon_{p+1}, \cdots,
\epsilon_{p+q})=\{j_0u(\epsilon_1, \cdots, \epsilon_p)\} \otimes
\{j_1u(\epsilon_{p+1}, \cdots, \epsilon_{p+q})\},$$ combining the
$j_0$- and $j_1$-structure in Definition \ref{D2}, globalizes to
mapping $j^*:\Sigma(Q,L)\rightarrow \Sigma(Q,-L)$. Then the mapping
$j^*$ is well-defined for all $p \geq 1,\  q \geq 1$. Using the
properties $\mathbf{(A)}$ and $\mathbf{(B)}$ below Definition
\ref{D2}, the formulas \eqref{Dfaction} and \eqref{Dfaction1},
we have the following:
$$j^{*} \circ E_{k}=E_{k} \circ j^{*}, \ k=1, \ldots, 2p, \ \ \ \ \ j^{*} \circ F_{l}=(-1)^{p+q+1}F_{l} \circ j^{*}, \ l=1, \ldots, 2q+1,$$

From the formulas \eqref{chanyoung} and \eqref{E1204}, it follows that
$$ \nabla^{-A} \circ j^{*} = j^{*} \circ \nabla^{A},$$ and hence
\begin{equation}\label{E7}
D_{+}^{-A_1} \circ j^*=j^* \circ D_{+}^{A_{1}},\ \ \ \ D_{-}^{-A_2}
\circ j^*=(-1)^{p+q+1}j^* \circ D_{-}^{A_{2}}.
\end{equation}
Similarly  we also define complex-antilinear mappings
$\widehat{j}^*,~j^{*}_{0},~ j^{*}_{1}:\Sigma(Q,L)\rightarrow \Sigma(Q,-L)$ as
$$\widehat{j}^*u(\epsilon_1, \cdots, \epsilon_p, \epsilon_{p+1},
\cdots, \epsilon_{p+q})=\{j_1u(\epsilon_1, \cdots, \epsilon_p)\}
\otimes \{j_0u(\epsilon_{p+1}, \cdots, \epsilon_{p+q})\},$$
$$j^*_{0}u(\epsilon_1, \cdots, \epsilon_p, \epsilon_{p+1}, \cdots,
\epsilon_{p+q})=\{j_0u(\epsilon_1, \cdots, \epsilon_p)\} \otimes
\{j_0u(\epsilon_{p+1}, \cdots, \epsilon_{p+q})\},$$
$$j^*_{1}u(\epsilon_1, \cdots, \epsilon_p, \epsilon_{p+1}, \cdots,
\epsilon_{p+q})=\{j_1u(\epsilon_1, \cdots, \epsilon_p)\} \otimes
\{j_1u(\epsilon_{p+1}, \cdots, \epsilon_{p+q})\}.$$ The mappings
$\widehat{j}^*$ and $j_{0}^*$ are well-defined when $q$
is odd, and  $j^*_{1}$ is well-defined for all $p \geq 1,\  q
\geq 1$. One can easily check that
\begin{equation}\label{E18}
D_{+}^{-A_1} \circ \widehat{j}^*=(-1)^{p+1} \ \widehat{j}^* \circ
D_{+}^{A_{1}},\ \ \ \ \ D_{-}^{-A_2} \circ \widehat{j}^*=(-1)^{p} \
\widehat{j}^* \circ D_{-}^{A_{2}},
\end{equation}
\begin{equation}\label{E19}
D_{+}^{-A_1} \circ j_{0}^*=j_{0}^* \circ D_{+}^{A_{1}},\ \ \ \ \ \ \
\ \ \ \ \ \ \ \ \ \ \ \ D_{-}^{-A_2} \circ j^*_{0}=(-1)^{p}j^*_{0}
\circ D_{-}^{A_{2}},
\end{equation}
\begin{equation}\label{E20}
D_{+}^{-A_1} \circ j^*_{1}=(-1)^{p+1}j^*_{1} \circ D_{+}^{A_{1}},\ \
\ \ D_{-}^{-A_2} \circ j^*_{1}=(-1)^{p+q+1}j^*_{1} \circ
D_{-}^{A_{2}}.
\end{equation}
Putting these together, we can produce various operators which anti-commute with spin$^{\mathbb C}$ Dirac operators :
\begin{proposition}\label{L1}
Under the assumptions of Theorem \ref{T2} and with the above notations, for $i=1,2$ :
\begin{enumerate}
\item For $p \geq 1$ and $q \geq 0$, $D^{A} \circ (\mu_i \circ D_{+}^{A_1})= -(\mu_i \circ D_{+}^{A_1})\circ
D^{A}$.
\item Let $p \geq 1$ and $q \geq 1$. If $p$ and $q$ are either both even or both odd, then \\ $D^{-A}
\circ (\mu_i \circ j^*)= -(\mu_i \circ j^*)\circ D^{A}$.
\item Let $p \geq 1$ and $q \geq 1$. If $p$ and $q$ are odd, then \\ $D^{-A} \circ (\mu_i \circ \widehat{j}^*)= -(\mu_i \circ \widehat{j}^*)\circ
D^{A}$ and $D^{-A} \circ (\mu_i \circ j^*_{0})= -(\mu_i \circ
j^*_{0})\circ D^{A}$.
\item Let $p \geq 1$ and $q \geq 1$. If $p$ and $q$ are even, then \\ $D^{-A} \circ j^*_{1}= -
j^*_{1}\circ D^{A}$.
\end{enumerate}
\end{proposition}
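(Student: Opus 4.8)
The plan is to treat all four items as instances of a single bookkeeping computation. In each case the operator to be handled is of the form $T=\mu_i\circ\Phi$, where $\Phi$ is one of $D_+^{A_1}$, $j^*$, $\widehat j^*$, $j_0^*$ — item~(4) being the degenerate case $T=j_1^*$ with no $\mu_i$ factor — and the claim is that $D^A\circ T=-T\circ D^A$ in item~(1), and $D^{-A}\circ T=-T\circ D^A$ in items~(2)--(4). First I would write $D^A=D_+^{A_1}+D_-^{A_2}$ (resp.\ $D^{-A}=D_+^{-A_1}+D_-^{-A_2}$) and push this factor through $\mu_i$ using the two relations displayed immediately before \eqref{E6} together with \eqref{E6} itself; note that \eqref{E6} and its two companions follow only from the Clifford identities \eqref{Dfaction}--\eqref{Dfaction1} and therefore hold verbatim with $-A$ in place of $A$. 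This replaces $D^A$ by $-\mu_i\circ\widetilde D^A$ (resp.\ $D^{-A}$ by $-\mu_i\circ\widetilde D^{-A}$), where $\widetilde D^{A}=D_+^{A_1}-D_-^{A_2}$. Then I would move the antilinear map $\Phi$ past $D_+$ and $D_-$ separately via the relevant intertwining identity among \eqref{E7}, \eqref{E18}, \eqref{E19}, \eqref{E20}, collect the signs acquired on the $D_+$- and $D_-$-summands, and observe that under the stated parity hypothesis both signs become $-1$, so that $D_+^{A_1}-(\pm1)D_-^{A_2}$ reassembles to $D_+^{A_1}+D_-^{A_2}=D^A$.

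Item~(1) is almost immediate: since $\Phi=D_+^{A_1}$ involves no change of connection, \eqref{E6} gives $D^A\circ\mu_i\circ D_+^{A_1}=-\mu_i\circ\widetilde D^A\circ D_+^{A_1}$, and it remains only to identify $\widetilde D^A\circ D_+^{A_1}$ with $D_+^{A_1}\circ D^A$. Expanding both products, this is equivalent to the anti-commutation $D_+^{A_1}\circ D_-^{A_2}=-D_-^{A_2}\circ D_+^{A_1}$, which I would extract from \eqref{E170}: comparing the absence of cross terms in $(D^A)^2$ with the formal expansion $(D^A)^2=(D_+^{A_1})^2+D_+^{A_1}D_-^{A_2}+D_-^{A_2}D_+^{A_1}+(D_-^{A_2})^2$ forces $D_+^{A_1}D_-^{A_2}+D_-^{A_2}D_+^{A_1}=0$, once one checks that $(D_-^{A_2})^2$ acts as $\varphi_1\otimes((D_{M_2}^{A_2})^2\varphi_2)$, which uses $\mu_1\cdot\mu_1=(-1)^p$ together with $(\sqrt{-1})^{2p}=(-1)^p$. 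Equivalently, the anti-commutation is read off directly from \eqref{Dfaction}--\eqref{Dfaction1} and the fact that $\mu_1$ anti-commutes with $D_{M_1}^{A_1}$ on the even-dimensional factor $M_1$.

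Items~(2)--(4) are then pure substitution. For item~(2), $D^{-A}\circ(\mu_i\circ j^*)=-\mu_i\circ(D_+^{-A_1}-D_-^{-A_2})\circ j^*=-\mu_i\circ j^*\circ(D_+^{A_1}-(-1)^{p+q+1}D_-^{A_2})$ by \eqref{E7}, and this equals $-(\mu_i\circ j^*)\circ D^A$ exactly when $(-1)^{p+q+1}=-1$, i.e.\ $p+q$ is even, i.e.\ $p$ and $q$ are both even or both odd. Item~(3) repeats this with $\widehat j^*$ and $j_0^*$ in place of $j^*$, using \eqref{E18} and \eqref{E19}; the resulting sign conditions all reduce to ``$p$ odd'', which together with the fact that $\widehat j^*$ and $j_0^*$ are defined only when $q$ is odd gives precisely ``$p,q$ both odd''. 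Item~(4) is the same computation with $j_1^*$ and \eqref{E20}, with no $\mu_i$ factor needed because $j_1^*$ already intertwines $D_+$ and $D_-$ with the individually correct signs; the requirement $(-1)^{p+1}=-1=(-1)^{p+q+1}$ amounts to $p$ even and $q$ even.

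The only genuine content is the anti-commutation $D_+^{A_1}D_-^{A_2}=-D_-^{A_2}D_+^{A_1}$ underlying item~(1); everything else is sign arithmetic, so the main obstacle is simply the risk of a slip in tracking powers of $\sqrt{-1}$ and of $(-1)$ through \eqref{Dfaction}--\eqref{Dfaction1} and \eqref{E7}--\eqref{E20}, or of overlooking the contribution of $\mu_1\cdot\mu_1$ when squaring $D_-^{A_2}$. One should also state explicitly that \eqref{E6} and its two companion relations remain valid for $D^{-A}$ — which is safe, since their derivation never uses the $U(1)$-connection.
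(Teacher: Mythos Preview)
Your proposal is correct and follows essentially the same route as the paper: push $D^A$ (resp.\ $D^{-A}$) through $\mu_i$ via \eqref{E6} to obtain $-\mu_i\circ\widetilde D^A$ (resp.\ $-\mu_i\circ\widetilde D^{-A}$), then use the intertwining identities \eqref{E7}--\eqref{E20} and collect signs. The only minor divergence is in item~(1): the paper records the anti-commutation $D_+^{A_1}D_-^{A_2}+D_-^{A_2}D_+^{A_1}=0$ as \eqref{E9} and justifies it directly by the vanishing of the mixed Riemann curvatures $R(E_k,F_l,\cdot,\cdot)$ together with Clifford anti-commutation, whereas you extract it from \eqref{E170}; both arguments are valid and amount to the same computation, and you even note the direct alternative. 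Your explicit remark that \eqref{E6} holds with $-A$ in place of $A$ is a point the paper uses silently.
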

\begin{proof}
Since the Riemann curvatures $R(E_k,F_l,\cdot,\cdot)=0$ vanish and
the Clifford multiplication anti-commutes, we have
\begin{equation}\label{E9}
D_{-}^{A_2}D_{+}^{A_1}+D_{+}^{A_1}D_{-}^{A_2}=0.
\end{equation}
Using \eqref{E6} and \eqref{E9}, we obtain
\begin{eqnarray*}
D^{A} \circ (\mu_i \circ D_{+}^{A_1})&=& - \mu_i\circ \widetilde{D}^{A} \circ
D_{+}^{A_1}\\ &=& - \mu_i\circ ((D^{A_1}_{+})^{2}-D_{-}^{A_2}\circ D_{+}^{A_1}) \\ &=& - \mu_i\circ ((D^{A_1}_{+})^{2}+D_{+}^{A_1}\circ D_{-}^{A_2}) \\
 &=&  -\mu_{i} \circ D_{+}^{A_1} \circ D^{A}.
\end{eqnarray*}
Suppose that $p$ and $q$ are either both even or both odd. By
\eqref{E6} and \eqref{E7}, we have
\begin{eqnarray*}
D^{-A} \circ (\mu_i \circ j^*)&=& - (\mu_i \circ  \widetilde{D}^{-A})
\circ j^* = -\mu_i \circ (D_{+}^{-A_1}-D_{-}^{-A_2})\circ j^* \\ &=&
-\mu_i \circ j^* \circ (D_{+}^{A_1}+D_{-}^{A_2} )= -(\mu_i \circ
j^*)\circ D^{A}.
\end{eqnarray*}
For the statement $(3)$, we assume that $p$ and $q$ are odd. By
\eqref{E6}, \eqref{E18} and \eqref{E19}, we see that $$\ \ \ \ \ \ \
D^{-A} \circ (\mu_i \circ \widehat{j}^*) = - (\mu_i \circ
\widetilde{D}^{-A}) \circ \widehat{j}^* = -(\mu_i \circ
\widehat{j}^*)\circ D^{A}$$
$$ \text{and} \ \ \ D^{-A} \circ
(\mu_i \circ j^*_{0}) = - (\mu_i \circ  \widetilde{D}^{-A}) \circ
j^*_{0} = -(\mu_i \circ j^*_{0})\circ D^{A}.$$ The statement $(4)$
follows from \eqref{E20}.
\end{proof}

\section{Proof of Theorem \ref{T2}  }

By Corollary \ref{coro1} and \eqref{E170}, we see that the
eigenvalues of $(D^{A})^{2}$ are all possible sums of one eigenvalue
of $(D_{M_1}^{A_1})^{2}$ and one of $(D_{M_2}^{A_2})^{2}$.

From now on, we omit the projections and rewrite the formula of
Corollary \ref{coro1} as
\begin{equation}\label{E150}
\Gamma_{\gamma}((D^{A})^{2}) =
\bigoplus_{\gamma=\chi+\nu}(\Gamma_{\chi}((D^{A_1}_{M_1})^2)
 \otimes \Gamma_{\nu}((D^{A_2}_{M_2})^2)).
\end{equation}
Using the decomposition $$\Sigma(M,L)=\Sigma^{+}(M,L) \oplus \Sigma^{-}(M,L),$$ where
$$\Sigma^{\pm}(M_1,L_1):= \{ \varphi \in \Sigma(M_1,L_1) ~|~
\mu_1 \cdot \varphi = \pm (\sqrt{-1})^{p}\varphi \},$$ we have a decomposition
$\Sigma(Q,L)=\Sigma^{+}(Q,L) \oplus \Sigma^{-}(Q,L)$ due to the
action of the volume form $\mu_{1}=E^{1} \wedge \cdots \wedge
E^{2p},$
$$\Sigma^{\pm}(Q,L):=\Sigma^{\pm}(M_1,L_1) \otimes \Sigma(M_2,L_2).$$  The positive
part $\psi^{+}$ (resp. negative part $\psi^{-}$) of $\psi \in
\Gamma(\Sigma(Q,L))$ is in fact equal to
$$ \psi^{\pm}=\frac{1}{2}\psi \pm \frac{1}{2}(-\sqrt{-1})^{p} \mu_1
\cdot \psi.$$

\begin{lemma}\label{L4}
Let $\Gamma^{\pm}_{0}(D^{A_1}_{M_1})$ be the space of all positive
(resp. negative) harmonic spinors of $D^{A_1}_{M_1}$. For any $\lambda
\neq 0 \in \text{Spec}(D^{A})$, define a complex vector space $$H_{\lambda}:=\{ \psi \in
\Gamma_{\lambda}(D^{A}) ~|~ D^{A_1}_{+}\psi
=0,~D^{A_2}_{-}\psi=\lambda \psi\}.$$ Then, in the notation
\eqref{E150}, we have
$$H_{\lambda}=\{\Gamma^{+}_{0}(D^{A_1}_{M_1}) \otimes
\Gamma_{(-1)^{p}\lambda}(D^{A_2}_{M_2})\} \oplus
\{\Gamma^{-}_{0}(D^{A_1}_{M_1}) \otimes
\Gamma_{-(-1)^{p}\lambda}(D^{A_2}_{M_2})\}.$$
\end{lemma}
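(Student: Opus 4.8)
The plan is to establish the two inclusions of the asserted identity separately, the inclusion ``$\supseteq$'' being a direct computation and ``$\subseteq$'' relying on the decomposition \eqref{E150} together with Lemma \ref{rema1}.

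For ``$\supseteq$'' I would argue pointwise on tensors. If $\varphi_1\in\Gamma^{+}_{0}(D^{A_1}_{M_1})$ and $\varphi_2\in\Gamma_{(-1)^{p}\lambda}(D^{A_2}_{M_2})$, then $D_{+}^{A_1}(\varphi_1\otimes\varphi_2)=(D^{A_1}_{M_1}\varphi_1)\otimes\varphi_2=0$ by \eqref{E10}, while, again by \eqref{E10} together with $\mu_1\cdot\varphi_1=(\sqrt{-1})^{p}\varphi_1$ and $(\sqrt{-1})^{2p}=(-1)^{p}$,
$$D_{-}^{A_2}(\varphi_1\otimes\varphi_2)=(\sqrt{-1})^{p}(\mu_1\cdot\varphi_1)\otimes(D^{A_2}_{M_2}\varphi_2)=(\sqrt{-1})^{2p}(-1)^{p}\lambda\,(\varphi_1\otimes\varphi_2)=\lambda\,(\varphi_1\otimes\varphi_2),$$
so $\varphi_1\otimes\varphi_2\in H_{\lambda}$. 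The parallel computation with $\varphi_1\in\Gamma^{-}_{0}(D^{A_1}_{M_1})$ (so $\mu_1\cdot\varphi_1=-(\sqrt{-1})^{p}\varphi_1$) and $\varphi_2\in\Gamma_{-(-1)^{p}\lambda}(D^{A_2}_{M_2})$ introduces two compensating sign changes and again gives $D_{-}^{A_2}(\varphi_1\otimes\varphi_2)=\lambda\,(\varphi_1\otimes\varphi_2)$. By linearity the whole right-hand side lies in $H_{\lambda}$.

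For ``$\subseteq$'', let $\psi\in H_{\lambda}$; it is smooth by elliptic regularity and satisfies $(D^{A})^{2}\psi=\lambda^{2}\psi$. I would then use \eqref{E150} (Corollary \ref{coro1}) to write $\psi$ inside $\bigoplus_{\chi+\nu=\lambda^{2}}\Gamma_{\chi}((D^{A_1}_{M_1})^{2})\otimes\Gamma_{\nu}((D^{A_2}_{M_2})^{2})$. Since $D_{+}^{A_1}$ preserves each summand and acts on it through $D^{A_1}_{M_1}$ on the first factor, $(D_{+}^{A_1})^{2}$ acts as multiplication by $\chi$ there; as $D_{+}^{A_1}\psi=0$ forces $(D_{+}^{A_1})^{2}\psi=0$, every summand with $\chi\neq 0$ disappears, leaving $\psi\in\Gamma_{0}((D^{A_1}_{M_1})^{2})\otimes\Gamma_{\lambda^{2}}((D^{A_2}_{M_2})^{2})$. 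Lemma \ref{rema1} identifies $\Gamma_{0}((D^{A_1}_{M_1})^{2})$ with $\Gamma_{0}(D^{A_1}_{M_1})$ and $\Gamma_{\lambda^{2}}((D^{A_2}_{M_2})^{2})$ with $\Gamma_{(-1)^{p}\lambda}(D^{A_2}_{M_2})\oplus\Gamma_{-(-1)^{p}\lambda}(D^{A_2}_{M_2})$, since $(-1)^{p}\lambda$ and $-(-1)^{p}\lambda$ are precisely the real numbers whose square is $\lambda^{2}$. Because $\dim M_1=2p$ the volume form $\mu_1$ anti-commutes with $D^{A_1}_{M_1}$, hence preserves $\Gamma_{0}(D^{A_1}_{M_1})$, so the latter splits as $\Gamma^{+}_{0}(D^{A_1}_{M_1})\oplus\Gamma^{-}_{0}(D^{A_1}_{M_1})$. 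Altogether $\psi=a+b+c+d$ with $a\in\Gamma^{+}_{0}\otimes\Gamma_{(-1)^{p}\lambda}$, $b\in\Gamma^{+}_{0}\otimes\Gamma_{-(-1)^{p}\lambda}$, $c\in\Gamma^{-}_{0}\otimes\Gamma_{(-1)^{p}\lambda}$ and $d\in\Gamma^{-}_{0}\otimes\Gamma_{-(-1)^{p}\lambda}$ (the arguments $D^{A_1}_{M_1},D^{A_2}_{M_2}$ being suppressed).

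The last step is to feed this decomposition into $D_{-}^{A_2}$ through \eqref{E10}. The same sign count as in the ``$\supseteq$'' computation gives $D_{-}^{A_2}a=\lambda a$ and $D_{-}^{A_2}d=\lambda d$, whereas $D_{-}^{A_2}b=-\lambda b$ and $D_{-}^{A_2}c=-\lambda c$ — a single sign flip occurs for $b$ (coming from $D^{A_2}_{M_2}$) and for $c$ (coming from $\mu_1$), and none for $a$ or $d$. Hence $D_{-}^{A_2}\psi=\lambda(a-b-c+d)$, and comparison with the hypothesis $D_{-}^{A_2}\psi=\lambda\psi=\lambda(a+b+c+d)$, using $\lambda\neq 0$, yields $b+c=0$; as $b$ and $c$ lie in different summands of the internal direct sum, $b=c=0$, so $\psi=a+d$, which is the claimed description. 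I do not expect a serious obstacle: the substance is just \eqref{E10}, \eqref{E150} and Lemma \ref{rema1}, and the one place that needs care is keeping the $\mu_1$-grading of harmonic spinors on $M_1$ synchronized with the eigenvalue grading of $D^{A_2}_{M_2}$ on $M_2$ as these get permuted by the mixing operator $D_{-}^{A_2}$.
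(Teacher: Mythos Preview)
Your proposal is correct and follows essentially the same route as the paper's own proof: both arguments reduce to \eqref{E150} with $\gamma=\lambda^2$, invoke Lemma~\ref{rema1} to pass from $(D^{A_2}_{M_2})^2$-eigenspaces to $D^{A_2}_{M_2}$-eigenspaces, split the $M_1$-harmonic spinors by the $\mu_1$-grading, and then use the condition $D_{-}^{A_2}\psi=\lambda\psi$ via \eqref{E10} to kill the cross terms. Your write-up is in fact more explicit than the paper's in two places---you spell out why only the $\chi=0$ summand survives (via $(D_{+}^{A_1})^2\psi=0$), and you carry out the sign bookkeeping $D_{-}^{A_2}a=\lambda a$, $D_{-}^{A_2}b=-\lambda b$, etc.\ in full---whereas the paper simply asserts these steps.
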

\begin{proof}
By \eqref{E10}, it is enough to show that
$$H_{\lambda} \subset \{\Gamma^{+}_{0}(D^{A_1}_{M_1}) \otimes
\Gamma_{(-1)^{p}\lambda}(D^{A_2}_{M_2})\} \oplus
\{\Gamma^{-}_{0}(D^{A_1}_{M_1}) \otimes
\Gamma_{-(-1)^{p}\lambda}(D^{A_2}_{M_2})\}.
$$
Suppose that $\psi \in H_{\lambda}$. Then $\psi \in
\Gamma_{\lambda^{2}}((D^{A})^{2})$ and \eqref{E150} implies that
\begin{equation}\label{E200}
\psi = \sum_{k,l}c_{k,l}\varphi_{0,k}\otimes
\varphi_{\lambda^{2},l},~~~~~c_{k,l} \neq 0 \in \mathbb{C},
\end{equation}
is a finite linear combination of tensor products of some
$\varphi_{0,k} \in \Gamma_{0}((D^{A_1}_{M_1})^2)$ and some
$\varphi_{\lambda^{2},l} \in
\Gamma_{\lambda^{2}}((D^{A_2}_{M_2})^2)$. By the decomposition of
$\Sigma(Q,L)$, we can rewrite \eqref{E200} as
$$\psi=\sum_{k,l}c_{k,l}\varphi^{+}_{0,k}\otimes
\varphi_{\lambda^{2},l} + \sum_{k,l}c_{k,l}\varphi^{-}_{0,k}\otimes
\varphi_{\lambda^{2},l},$$ where $\varphi^{\pm}_{0,k} \in
\Gamma_{0}^{\pm}((D^{A_1}_{M_1})^2)$. By Lemma \ref{rema1}, the
eigenvalue $\lambda^2$ of $(D_{M_2}^{A_2})^2$ comes from the
eigenvalue $\lambda$ or $-\lambda$ of $D_{M_2}^{A_2}$. Since
$D^{A_2}_{-}\psi= \lambda\psi$, one can easily see from \eqref{E10}
$$\psi \in \{\Gamma^{+}_{0}(D^{A_1}_{M_1}) \otimes
\Gamma_{(-1)^{p}\lambda}(D^{A_2}_{M_2})\} \oplus
\{\Gamma^{-}_{0}(D^{A_1}_{M_1}) \otimes
\Gamma_{-(-1)^{p}\lambda}(D^{A_2}_{M_2})\}.$$
\end{proof}


$\pfbb$.

At first, using Proposition \ref{L1} $(1)$, one can define a map:
$$f: \Gamma_{\lambda}(D^{A}) \cap H_{\lambda}^{\perp} \longrightarrow \Gamma_{-\lambda}(D^{A}) \cap H_{-\lambda}^{\perp} \
 \text{defined by} \ \psi \mapsto \mu_{1}\cdot D_{+}^{A_1} \psi,$$
where $H_{\lambda}^{\perp}$ is the orthogonal complement of
$H_{\lambda}$. Note that $f$ is bijective via the
inverse map
$$f^{-1}:= (-1)^{p}(D_{+}^{A_1})^{-1} \cdot \mu_{1}.$$
By Lemma \ref{L4},
\begin{eqnarray}\label{sungchan}
H_{-\lambda} &=&
\{\Gamma^{+}_{0}(D^{A_1}_{M_1}) \otimes
\Gamma_{-(-1)^{p}\lambda}(D^{A_2}_{M_2})\} \oplus
\{\Gamma^{-}_{0}(D^{A_1}_{M_1}) \otimes
\Gamma_{(-1)^{p}\lambda}(D^{A_2}_{M_2})\}.
\end{eqnarray}
The symmetry of
$\text{Spec}(D^A)$ holds if and only if $\dim_{\Bbb C} H_{\lambda}=
\dim_{\Bbb C}H_{-\lambda}$ for any $\lambda \neq 0 \in \text{Spec}(D^A)$.
Letting $\dim_{\Bbb C}(\Gamma^{+}_{0}(D^{A_1}_{M_1}))=a_{1}$,  $\dim_{\Bbb C}(
\Gamma^{-}_{0}(D^{A_1}_{M_1}))=a_2$, $\dim_{\Bbb C}
(\Gamma_{(-1)^{p}\lambda}(D^{A_2}_{M_2}))=b_{1, \lambda}$,  and $\dim_{\Bbb C}(
\Gamma_{-(-1)^{p}\lambda}(D^{A_2}_{M_2}))=b_{2,\lambda},$  Lemma \ref{L4} and \eqref{sungchan} give
\begin{eqnarray*}
\dim_{\Bbb C} H_{\lambda} - \dim_{\Bbb C}H_{-\lambda} &=& (a_1 - a_2)(b_{1,\lambda}-b_{2,\lambda}) \\
&=&    (\text{ind}~ (D^{A_1}_{M_1}|_{\Sigma^{+}(M_1,L_1)}))(b_{1,\lambda}
-b_{2,\lambda}) \\ &=& ( e^{ \frac{1}{2}c_{1}(L_{1})} \hat{A}(M_1))[M_{1}]
(b_{1,\lambda} -b_{2,\lambda}),
\end{eqnarray*}
where the last equality is due to the Atiyah-Singer index theorem \cite{LM89}.
Now the desired conclusion follows immediately.






\section{examples}

In this section, applying Theorem \ref{T2}, we present an important
example.
Let $D^{0}:=i \partial _{\theta}$ be the Dirac operator of
$(S^1,g_2=d\theta^{2})$. Then the  eigenvectors of unit $L^2$-norm are
$$\frac{e^{in\theta}}{\sqrt{2\pi}}\ \ \ \  \textrm{for }n \in \mathbb{Z}$$ with
multiplicity $1$, and hence $\text{Spec}(D^0)=\mathbb{Z}$. Thus $\text{Spec}(D^0)$ is symmetric, and since
$$\sum_{n \in \mathbb{Z}\setminus\{0\}} \frac{\textrm{sign}(n)}{|n|^{s}} = 0\ \ \ \textrm{for}\ \text{Re}(s)\gg 1,$$ $\eta_{D^0}(s)=0$ for all $s \in \mathbb{C}$. In \cite{eck07}, it is shown that the Dirac
spectrum of $(M_{1}^{2p} \times S^1, g_1+g_2)$ for a spin manifold $M_1^{2p}$ is symmetric. We can generalize this to the $\text{spin}^{\mathbb{C}}$ case :

\begin{example}
Note that $D^{-iad\theta}=D^{0}+a$ for $a \in \mathbb{R}$  is a
$\text{spin}^{\mathbb{C}}$ Dirac operator of
$(S^1,g_2=d\theta^{2})$. Thus $$\text{Spec}(D^{-iad\theta})= \{n+a \
| \ n \in \mathbb{Z}\},$$ and hence $\text{Spec}(D^{-iad\theta})$ is
symmetric if and only if $a \in \mathbb{Z} \cup
\frac{1}{2}\mathbb{Z}$. In this case, by Theorem \ref{T2}, the
$\text{spin}^{\mathbb{C}}$ Dirac spectrum of $(M_{1}^{2p} \times
S^1, g_1+g_2)$ for any product $\text{spin}^{\mathbb{C}}$ structure is symmetric, and hence the corresponding eta invariant vanishes.

The eta invariant of a spin$^{\Bbb C}$ Dirac operator on a product spin$^{\Bbb C}$ manifold $M_1^2\times S^1$ appears when computing the dimension of the moduli space of Seiberg-Witten equations on a cylindrical-ended 4-manifold with asymptotic boundary equal to $M_1^2\times S^1$. For details, the readers are referred to \cite{N00}.

For general $a$, it is known that the eta invariant of $D^{-iad\theta}$ is $1-2a$. (See \cite[Example 4.1.7]{N00} or \cite{Atiyah75-2}.)
\end{example}

\section*{Acknowledgments}
This work was supported by the National Research Foundation of
Korea(NRF) grant funded by the Korea government(MEST) (No.
2012-0000341, 2011-0002791). \footnotesize
\bibliographystyle{amsplain}

\providecommand{\bysame}{\leavevmode\hbox
to3em{\hrulefill}\thinspace}

\end{document}